\newtheorem{theorem}{Theorem}[section]
\newtheorem{lemma}[theorem]{Lemma}
\newtheorem{corollary}[theorem]{Corollary}
\theoremstyle{definition}
\newtheorem{definition}[theorem]{Definition}
\newtheorem{fact}[theorem]{Fact}
\newtheorem{claim}[theorem]{Claim}
\numberwithin{equation}{section}
\title{Harrington's Principle in Higher Order Arithmetic}
\author{Yong Cheng}
\address{Institut f\"ur mathematische Logik und Grundlagenforschung, Universit\"at M\"unster, Einsteinstr. 62, 48149  M\"unster, Germany}
\email{world-cyr@hotmail.com}
\author{Ralf Schindler}
\address{Institut f\"ur mathematische Logik und Grundlagenforschung, Universit\"at M\"unster, Einsteinstr. 62, 48149  M\"unster, Germany}
\email{rds@math.uni-muenster.de}
\thanks{The results in this paper strengthen the results from the first author's Ph.D.\
thesis written in 2012 at the National University of Singapore under the supervision of Chong Chi Tat and W.\ Hugh Woodin. The first author would like to express his deep gratitude to Hugh Woodin as well as to the members of his Ph.D. committee for all their support.}
\subjclass[2000]{03E35, 03E55, 03E30}
\keywords{Harrington's Principle {\sf HP}, $0^{\sharp}$, remarkable cardinal, almost disjoint forcing, class forcing, reshaping,  {\sf HP$(\varphi)$}, subcomplete forcing, Revised Countable Support (RCS) iterations, iterated club shooting, $Z_2$, $Z_3$, $Z_4$.}
\begin{document}

\begin{abstract}
Let $Z_2$, $Z_3$, and $Z_4$ denote $2^{\rm nd}$, $3^{\rm rd}$, and
$4^{\rm th}$ order arithmetic, respectively. 
We let Harrington's Principle, {\sf HP}, denote the statement that
there is a real $x$ such that every $x$--admissible ordinal is a cardinal in $L$. The
known proofs of Harrington's theorem ``$Det(\Sigma_1^1)$ implies $0^{\sharp}$ exists" are done in two steps: first show that $Det(\Sigma_1^1)$ implies {\sf HP}, and then show that {\sf HP} implies $0^{\sharp}$ exists. The first step is provable in $Z_2$.
In this paper we show 
%via class forcing 
that $Z_2 \, + \, {\sf HP}$ is equiconsistent with ${\sf ZFC}$ and that
$Z_3\, + \, {\sf HP}$ is equiconsistent with ${\sf ZFC} \, +$ there exists a remarkable cardinal. As a corollary, $Z_3\, +  \, {\sf HP}$ does not imply  $0^{\sharp}$ exists,
whereas $Z_4\, + \, {\sf HP}$ does. We also study strengthenings of Harrington's Principle over $2^{\rm nd}$ and $3^{\rm rd}$ order arithmetic.
% and assume that $\varphi$ relativizes to $L$ and $\varphi(\alpha)$ implies $\alpha$ is a cardinal. Let {\sf HP$(\varphi)$}  denote the statement: there is a real $x$ such that if $\alpha$ is $x$-admissible then $L\models \varphi(\alpha)$. We show via class forcing that $Z_2\, + \, HP(\varphi)$ is equiconsistent with  $ZFC\, + \,Ord$ is a club in $\varphi$. We show via iterated class forcing with revised countable support that if $\varphi$ is $\Sigma_2$ then $Z_3\,+\, HP(\varphi)$ is equiconsistent with $ZFC\,+\,$ there exists a remarkable cardinal $\,+ \,Ord$ is 2-Mahlo in $\varphi$. As a corollary, for any $\Sigma_2$ formula $\varphi$, $Z_4$ is the minimal system of higher order arithmetic to show that {\sf HP$(\varphi)$} implies  $0^{\sharp}$ exists.
\end{abstract}

\maketitle

\section{Introduction}
Over the last four decades, much work has been done on the relationship between large cardinal and  determinacy hypothesis, especially the large cardinal-determinacy correspondence. The first result in this line was proved by Martin and Harrington.

\begin{theorem}\label{Martin-Harrinton theorem}
{\bf (Martin--Harrington}, \cite{Harrington 1}{\bf )} 
In {\sf ZF}, $Det(\Sigma_1^1)$ if and only if  $0^{\sharp}$ exists.
\end{theorem}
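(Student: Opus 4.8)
The plan is to prove the two directions separately, following the classical arguments of Martin (for the forward implication) and Harrington (for the converse). For the forward direction I would assume $0^\sharp$ exists and prove $Det(\Sigma_1^1)$. By switching the roles of the two players it suffices to establish $\Pi_1^1$ determinacy, so fix a $\Pi_1^1$ payoff set $A$ together with a tree representation so that $x \in A$ iff the tree $T_x$ on $\omega$ is wellfounded. The idea is to \emph{unfold} the game: alongside her integer moves, the player aiming to keep $T_x$ wellfounded is also required to play ordinals below a fixed Silver indiscernible $\kappa$ which together define an order-preserving map of $T_x$ into the ordinals. The losing condition in this auxiliary game is closed, so by the Gale--Stewart theorem the auxiliary game is determined.

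If the wellfoundedness player wins the auxiliary game, then projecting away the ordinal moves yields a winning strategy for her in the original game. The crux is the other case: if the opponent wins the auxiliary game, one must integrate out the auxiliary ordinal moves to recover a strategy in the original game. This is exactly where $0^\sharp$, as opposed to a mere supply of ordinals, is essential: the Silver indiscernibles provide a homogeneity property, namely that every $L$-definable function on increasing tuples of indiscernibles is determined by the order type of its arguments, and this yields a countably complete, homogeneous measure on the auxiliary ordinal moves with respect to which one can average the opponent's responses into a single well-defined strategy. Establishing this homogeneity and verifying that the averaged strategy is genuinely winning is the main obstacle in this direction.

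For the converse I would follow the two-step route indicated in the abstract. First, assuming $Det(\Sigma_1^1)$, I would produce a real $x$ witnessing Harrington's Principle, i.e. such that every $x$-admissible ordinal is a cardinal in $L$. Here the tool is a Gandy--Harrington style argument: one designs an auxiliary game, coded so that its payoff set is $\Sigma_1^1$, whose determinacy forces the existence of such a real, and the Gandy--Harrington topology together with the theory of admissible ordinals is used to analyze the generic-like object extracted from a winning strategy. This step is the principal difficulty of the converse.

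Second, I would derive the existence of $0^\sharp$ from Harrington's Principle. Granted a real $x$ all of whose admissible ordinals are cardinals in $L$, one shows, using G\"odel's condensation lemma and the fine structure of $L$, that a suitable class of $L$-cardinals forms a set of order-indiscernibles for $L$ satisfying the requisite reflection properties; by Silver's characterization this is precisely the assertion that $0^\sharp$ exists, equivalently that there is a nontrivial elementary embedding $j \colon L \to L$. Assembling the two steps completes the converse and hence the theorem.
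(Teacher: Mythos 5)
The paper states this theorem as cited background (Martin--Harrington, \cite{Harrington 1}) and gives no proof of it; the only ingredient it proves itself is the second step of Harrington's direction, namely ``{\sf HP} implies $0^{\sharp}$ exists,'' as Theorem \ref{cited theorem from Woodin} (there carried out in $Z_4$). Your overall decomposition --- Martin's unfolded auxiliary game for ``$0^{\sharp}$ implies $Det(\Sigma^1_1)$,'' and the two-step route $Det(\Sigma^1_1) \Rightarrow {\sf HP} \Rightarrow 0^{\sharp}$ for the converse --- is exactly the standard architecture the paper's introduction describes, so the skeleton of your proposal is sound. However, two of your key steps are misdescribed in ways that would break a fully written proof.

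First, in the forward direction, $0^{\sharp}$ does \emph{not} supply a countably complete ultrafilter measuring arbitrary sets of ordinals in $V$; the Silver indiscernibles yield homogeneity (and an induced countably complete filter) only for \emph{constructible} functions, i.e., an $L$-ultrafilter. So ``averaging the opponent's responses'' is illegitimate for a winning strategy obtained from Gale--Stewart in $V$, which is what your sketch produces. The repair is to note that the payoff tree of the lightface $\Sigma^1_1$ game is recursive, hence the closed auxiliary game with ordinal moves lies in $L$; apply Gale--Stewart \emph{inside} $L$ to get a strategy $\tau \in L$, observe that being a winning strategy in a closed (or open) game is absolute between $L$ and $V$ (it is the wellfoundedness of the tree of $\tau$-consistent not-yet-lost positions), and only then invoke indiscernibility of $\tau$'s responses on tuples of indiscernibles. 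Second, your step from {\sf HP} to $0^{\sharp}$ is wrong as stated: condensation and fine structure do not show that a class of $L$-cardinals forms order-indiscernibles for $L$ --- $L$-cardinals are not indiscernible (plenty of them are definable in $L$), and extracting genuine indiscernibles directly is essentially as hard as producing $0^{\sharp}$ itself. Silver's argument, in the form the paper actually proves (Theorem \ref{cited theorem from Woodin}), instead takes $x$ witnessing {\sf HP}, collapses an elementary hull of a suitable $L_\beta[x]$ closed under $\omega$-sequences to get $j: L_{\theta}[x] \rightarrow L_{\beta}[x]$ with critical point $\kappa$, uses {\sf HP} to see that $\theta$ is an $L$-cardinal so that $(\kappa^{+})^{L}\leq\theta$ and hence $U=\{A\subseteq\kappa \mid A\in L \wedge \kappa\in j(A)\}$ measures all constructible subsets of $\kappa$, and uses the $\omega$-closure to get countable completeness of $U$, whence $(L_{(\kappa^+)^L},\in,U)$ is an iterable premouse and $0^{\sharp}$ exists. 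You should replace your indiscernibility claim with this embedding/$L$-ultrafilter construction.
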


\begin{definition} We let
{\em Harrington's Principle}, {\sf HP} for short, denote the following statement:
\[\exists x\in 2^{\omega}\forall \alpha(\alpha\text{ is $x$-admissible $\longrightarrow \alpha$ is an $L$-cardinal)}.\]
\end{definition}

\begin{theorem}\label{Silver's theorem}
{\bf (Silver}, \cite{Harrington 1}{\bf )} In {\sf ZF}, {\sf HP} implies $0^{\sharp}$ exists.
\end{theorem}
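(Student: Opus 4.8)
The plan is to reduce the conclusion to Silver's structural description of $0^{\sharp}$: it suffices to manufacture, out of the real $x$ furnished by {\sf HP}, an uncountable set $I$ of ordinals that are order-indiscernible for $L$ and generate $L$ under the definable Skolem functions. Such a set determines an Ehrenfeucht--Mostowski blueprint, and once one checks that this blueprint is well-founded and remarkable, its existence (coded as a real) is exactly $0^{\sharp}$. So the entire problem becomes: produce uncountably many genuine indiscernibles for $L$.

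First I would fix $x\in 2^{\omega}$ witnessing {\sf HP} and record the geography of the class $A=\{\alpha:\alpha\text{ is }x\text{-admissible}\}$. For every ordinal $\xi$ the next $x$-admissible above $\xi$ has cardinality $\le\abs{\xi}+\aleph_0$, so $A$ is unbounded and every regular uncountable cardinal $\kappa$ of $V$ is a limit of members of $A$ with $\abs{A\cap\kappa}=\kappa$. Fixing such a $\kappa$ and a large $x$-admissible $\delta>\kappa$, the set $A\cap\kappa$ is an uncountable candidate index set, and by {\sf HP} every one of its members is an $L$-cardinal. The target is then to show that $A\cap\kappa$ is a set of indiscernibles for $\langle L_{\delta},\in\rangle$ and feed this into Silver's criterion.

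The technical heart is to upgrade ``every member of $A$ is an $L$-cardinal'' to honest indiscernibility, and this is where both halves of {\sf HP} are used. Admissibility of $\alpha$ gives $\Sigma_1$-reflection inside $L_{\alpha}$ and, crucially, that the $\Sigma_1$-Skolem hull in $L_{\alpha}$ of any bounded parameter set stays bounded in $\alpha$; this confines the local theory around $\alpha$ so it cannot encode the position of $\alpha$. The assumption that $\alpha$ is an $L$-cardinal supplies the complementary fact that no $L$-definable surjection from a smaller ordinal onto $\alpha$ exists, so no formula can isolate $\alpha$ via a collapsing map. Combining these, I would prove by induction on complexity that any two increasing tuples $\alpha_1<\cdots<\alpha_n$ and $\beta_1<\cdots<\beta_n$ from $A\cap\delta$ satisfy the same formulas in $L_{\delta}$: the base case is $\Sigma_1$-indiscernibility from reflection at the relevant admissible levels, and the inductive step lifts $\Sigma_n$-truth about a tuple to $\Sigma_1$-truth over a larger admissible level (a master-code and reflection argument), exploiting that $A$ is unbounded so that more $x$-admissibles are always available above each level. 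With indiscernibility in hand, Silver's theorem yields $0^{\sharp}$.

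I expect the decisive difficulty to be precisely this passage from ``$L$-cardinal'' to ``indiscernible.'' Unlike the derivation of indiscernibles from a measurable or a Ramsey cardinal, there is no partition relation to homogenize an arbitrary coloring, and in plain {\sf ZF} none is available; the indiscernibility must be wrung directly out of the fine structure of $L$ together with admissibility. The delicate part is controlling formulas of arbitrary quantifier rank -- guaranteeing that raising the complexity never lets $L$ detect which $x$-admissible ordinal occupies a given coordinate -- so that the interaction of $\Sigma_1$-reflection (admissibility) with non-collapse ($L$-cardinality) has to be managed uniformly, with enough $x$-admissibles above every relevant level to absorb each jump in complexity. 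Making that induction go through cleanly is the step I would expect to demand the most care.
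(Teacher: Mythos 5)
There is a genuine gap, and it is the one you yourself flag at the end: the passage from ``every $x$-admissible is an $L$-cardinal'' to ``the $x$-admissibles are indiscernibles for $L$'' is the entire content of the theorem, and your sketch supplies no mechanism for it. Admissibility of $\alpha$ yields $\Sigma_1$-reflection and $\Delta_0$-collection \emph{inside} $L_\alpha[x]$; it says nothing about the first-order type of $\alpha$ as an element of $L_\delta$, which is what indiscernibility concerns. (Your supporting lemma is also false as stated: $\Sigma_1$ Skolem hulls of bounded sets need not be bounded in an admissible ordinal --- indeed every element of $L_{\omega_1^{CK}}$ is $\Sigma_1$-definable over $L_{\omega_1^{CK}}$ without parameters.) Moreover, indiscernibility of the \emph{whole} class $A\cap\delta$ is too strong a target: a formula such as ``there is an $L$-cardinal strictly between $v_1$ and $v_2$'' separates consecutive members of $A$ from widely spaced ones unless one proves a uniformity ($L$-cardinals between any two consecutive $x$-admissibles) that nothing in admissibility plus non-collapse delivers. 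At best one could hope for \emph{some} uncountable indiscernible subset, and extracting one is exactly where a homogenization device (a partition property, or an iterable measure) is needed; your own admission that ``no partition relation is available'' concedes that the main step of the proof is missing. There is also a decisive structural objection: your argument uses only one uncountable regular $\kappa$, the $x$-admissibles below it, and one admissible $\delta>\kappa$ --- resources available in $Z_3$. If the sketch worked, it would show $Z_3\,+\,${\sf HP} implies $0^{\sharp}$ exists, contradicting Corollary \ref{the main corollary for paper} (granting the consistency of a remarkable cardinal).

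The actual argument, as in the paper's proof of Theorem \ref{cited theorem from Woodin}, runs through an embedding, not through direct indiscernibility: take $\beta$ $x$-admissible with ${}^\omega L_\beta[x]\subseteq L_\beta[x]$, take $X\prec L_\beta[x]$ with $|X|=\omega_1$, $\omega_2\in X$, and $X^\omega\subseteq X$, and collapse to get $j\colon L_\theta[x]\rightarrow L_\beta[x]$ with critical point $\kappa$. {\sf HP} enters at exactly one point: $\theta$ is $x$-admissible, hence an $L$-cardinal, so $(\kappa^+)^L\leq\theta$ and $U=\{A\subseteq\kappa \mid A\in L\wedge\kappa\in j(A)\}$ is a total $L$-ultrafilter on $\kappa$; the $\omega$-closure of $X$ makes $U$ countably complete, so the premouse $(L_{(\kappa^+)^L},\in,U)$ is iterable, which is $0^{\sharp}$. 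The indiscernibles then fall out of iterating this mouse, rather than being read off the $x$-admissibles. Note that this is precisely where fourth-order resources ($\omega_2$ and the $\omega$-closed hull) are consumed --- which is why the theorem is provable in $Z_4$ but, by the paper's main result, not in $Z_3$; any correct proof must use some such resource, and yours uses none.
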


\begin{definition}
\begin{enumerate}[(i)]
  \item $Z_{2}= ZFC^{-} +$ Every set is countable.\footnote{$ZFC^{-}$ denotes $ZFC$  with the Power Set Axiom deleted and Collection instead of Replacement.}
  \item $Z_{3}= ZFC^{-} + \mathcal{P}(\omega)$ exists + Every set is of cardinality $\leq \beth_1$.
      \item $Z_4=ZFC^{-}+ \mathcal{P}(\mathcal{P}(\omega))$ exists + Every set is of cardinality $\leq \beth_2$.
\end{enumerate}
\end{definition}

$Z_2$, $Z_3$, and $Z_4$ are the corresponding axiomatic systems for second order arithmetic (SOA), third order arithmetic, and fourth order arithmetic, respectively.
%\footnote{Under $CH, Z_{3}= ZFC^{-} + \mathcal{P}(\omega)$ exists + every set is of cardinality $\leq \omega_1$. Assuming $GCH, Z_4=ZFC^{-}+ \mathcal{P}(\mathcal{P}(\omega))$ exists + every set is of cardinality $\leq\omega_2$.} 
%Similarly, we can define $Z_n$ for $n>4$. $Z_n(n\geq 2)$ are axiomatic systems in the language of set theory for higher order arithmetic. 
Note that  $Z_3\vdash H_{\omega_1}\models Z_2$ and
$Z_4\vdash H_{\beth_1^+}\models Z_3$.
%Similarly, we have $Z_{n+1}\vdash Con(Z_n)$. So higher order arithmetic $Z_n$'s forms a hierarchy with
%increasing consistency strength. i.e. $Z_2< Z_3<\cdots<Z_n<Z_{n+1}<\cdots<ZFC$.

The known proofs of Harrington's theorem ``$Det(\Sigma_1^1)$ implies $0^{\sharp}$ exists" are done in two steps: first show that $Det(\Sigma_1^1)$ implies {\sf HP}, and then show that {\sf HP} implies $0^{\sharp}$ exists. The first step is provable in  $Z_2$. In this paper we prove that $Z_2 \, + \, {\sf HP}$ is equiconsistent with ${\sf ZFC}$ and $Z_3\, + \, {\sf HP}$ is equiconsistent with ${\sf ZFC} \, +$ there exists a remarkable cardinal. As a corollary, we have $Z_3\, +  \, {\sf HP}$ does not imply  $0^{\sharp}$ exists. In contrast, $Z_4\, + \, {\sf HP}$ implies  $0^{\sharp}$ exists. \par
We also investigate strengthenings of Harrington's Principle, {\sf HP$(\varphi)$}, over higher order arithmetic.
\begin{definition}\label{defn_hpphi}
Let $\varphi(-)$ be a $\Sigma_2$--formula in the language of set theory such that,
provably in {\sf ZFC}: for all $\alpha$, if 
$\varphi(\alpha)$, then $\alpha$ is an inaccessible cardinal and $L \models \varphi(\alpha)$. 
Let {\sf HP$(\varphi)$} denote the statement: 
\[\exists x\in 2^{\omega}\forall \alpha(\alpha\text{ is $x$-admissible $\longrightarrow L \models \varphi(\alpha))$}.\]
\end{definition}

We show that $Z_2\, + \, HP(\varphi)$ is equiconsistent with $ZFC\, + \, \{ 
\alpha | \varphi(\alpha) \}$ is stationary and
%. We show via iterated forcing with countable revised support that if $\varphi$ is %$\Sigma_2$, then 
that $Z_3\,+\, HP(\varphi)$ is equiconsistent with 
\begin{eqnarray*}
ZFC\,+\, \mbox{ there exists a remarkable cardinal } \kappa \mbox{ with } \varphi(\kappa) \,+ \,  \\
\{ \alpha | \varphi(\alpha) \wedge \{ \beta<\alpha | \varphi(\beta) \} \mbox{ is stationary in } \alpha \} \mbox{ is stationary}.
\end{eqnarray*} 
As a corollary, $Z_4$ is the minimal system of higher order arithmetic to show that 
{\sf HP}, {\sf HP$(\varphi)$}, and $0^{\sharp}$ exists
are pairwise equivalent with each other.

\section{Definitions and preliminaries}\label{forcing background}

Our definitions and notations are standard. We refer to the textbooks \cite{Jech}, \cite{Higher infinite}, \cite{Kunen 1}, or \cite{schindler} for the definitions and notations we use. For the definition of admissible sets, admissible ordinals, and $x$-admissible ordinals for 
$x \in 2^\omega$, see \cite{Barwise}, \cite{Mansfield 1}, and \cite{Constructiblity}. 
Our classes will always be {\em definable} ones.
Our notations about forcing are standard (see \cite{Jech} and \cite{Jech 2}). 
%The notions of Levy collapse and almost disjoint forcing are standard (see \cite{Jech} %and \cite{Kunen 1}). 
For the general theory of forcing, see \cite{Kunen 1}, and for Jensen's
theory of subcomplete forcing, see \cite{Jensen}. For Revised Countable Support (RCS) iteration, see \cite{Shelah} and also \cite{JensenI}. For notions of large cardinals, see \cite{Higher infinite} or \cite{schindler}. We say that $0^{\sharp}$ exists if there exists an iterable premouse of the form $(L_{\alpha}, \in, U)$ where $U\neq\emptyset$,
see e.g.\
\cite{schindler}. We can define $0^{\sharp}$ in $Z_2$.  In $Z_2$, $0^{\sharp}$ exists if and only if $$\exists x\in \omega^{\omega} \, (x \mbox{ codes a countable iterable premouse} ){\rm , }$$ which is a $\Sigma^1_3$ statement.

%\begin{fact}\label{basic fact on admissible ordinals}
%(\cite{Constructiblity}) \quad $\alpha$ is admissible if and only if there is no %$\Sigma_1(L_{\alpha})$ map $f$ which maps some $\beta<\alpha$ cofinally into %$\alpha$. $\alpha$ is $X$-admissible if and only if there is no $\Sigma_1(L_{\alpha}%[X])$ map $f$ which maps some $\beta<\alpha$ cofinally into $\alpha$.
%\end{fact}

The notion of remarkable cardinals was introduced by the second author in \cite{Schindler 2}.
\begin{definition} (\cite{Schindler 2})
%\begin{enumerate}[(1)]
A cardinal $\kappa$ is {\em remarkable} if and only if for all regular cardinals $\theta>\kappa$ there are $\pi, M, \bar{\kappa}, \sigma, N$, and $\bar{\theta}$ such that the following hold: $\pi: M\rightarrow H_{\theta}$ is an elementary embedding, $M$ is countable and transitive, $\pi(\bar{\kappa})=\kappa$, $\sigma: M\rightarrow N$ is an elementary embedding with critical point $\bar{\kappa}$, $N$ is countable and transitive, $\bar{\theta}=M\cap Ord$ is a regular cardinal in $N, \sigma(\bar{\kappa})>\bar{\theta}$, and $M=H_{\bar{\theta}}^{N}$, i.e. $M\in N$ and $N\models M$ is the set of all sets which are hereditarily smaller than $\bar{\theta}$.
\end{definition}
\begin{definition} (\cite{Schindler 2})
Let $\kappa$ be an inaccessible cardinal. Let $G$ be $Col(\omega, <\kappa)$-generic over $V$, let $\theta>\kappa$ be a cardinal, and let $X\in [H_{\theta}^{V[G]}]^{\omega} \cap V[G]$. We say that $X$ {\em condenses 
remarkably} if $X=ran(\pi)$ for some elementary $$\pi: (H_{\beta}^{V[G\cap H_{\alpha}^{V}]}, \in, H_{\beta}^{V}, G\cap H_{\alpha}^{V})\rightarrow (H_{\theta}^{V[G]}, \in, H_{\theta}^{V}, G)$$ where $\alpha=crit(\pi)<\beta<\kappa$ and $\beta$ is a regular cardinal in $V$.
%\end{enumerate}
\end{definition}

\begin{lemma}\label{key lemma on remarkable cn}
{\rm (\cite{Schindler 2})} A cardinal $\kappa$ is remarkable if and only if  for all regular cardinals $\theta>\kappa$ we have that
$$\Vdash^{V}_{Col(\omega, <\kappa)} \mbox{``}\{X\in [H_{\check{\theta}}^{V[\dot{G}]}]^{\omega} \cap {V[\dot{G}]}
: X  \mbox{ condenses remarkably } \} \mbox{ is stationary.'' }$$
\end{lemma}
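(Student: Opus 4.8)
The plan is to prove the two implications separately, using as a common tool the absoluteness of elementary embeddings of a countable structure into a fixed set. Concretely, if $M$ is countable and $A$ is any set, then the tree of finite partial maps from $M$ into $A$ that preserve the relevant first-order formulas (with the domain enumerated along a fixed well-ordering of $M$) is a tree of height $\omega$ lying in the ground model, and an elementary embedding $M \to A$ is exactly an infinite branch through it. Since well-foundedness of a tree is absolute between transitive models of {\sf ZF} containing it, the existence of such an embedding is absolute between $V$ and any $V[G]$. This is what will let me transport embedding-witnesses between $V$, where the combinatorial definition of remarkability lives, and the collapse extension $V[G]$, where the condensation property lives.

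For the direction from remarkability to generic stationarity, I would fix a regular $\theta > \kappa$, a condition $p \in \mathrm{Col}(\omega, <\kappa)$, and a name $\dot f$ for a function on $[H_\theta^{V[\dot G]}]^{<\omega}$, and aim to find a condition below $p$ forcing that some remarkably condensing $X$ is closed under $\dot f$. Picking a regular $\lambda$ large enough that $\theta, p, \dot f \in H_\lambda$ and applying remarkability at $\lambda$ yields a countable transitive $M$, an elementary $\pi: M \to H_\lambda$ with $\kappa, \theta, p, \dot f \in \mathrm{ran}(\pi)$ and $\pi(\bar\kappa) = \kappa$, together with $\sigma: M \to N$ of critical point $\bar\kappa$, where $\bar\theta = M \cap \mathrm{Ord}$ is regular in $N$, $M = H_{\bar\theta}^N$, and $\sigma(\bar\kappa) > \bar\theta$. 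Since $M$ and $N$ are countable I can choose an $M$-generic $g$ for $\mathrm{Col}(\omega, <\bar\kappa)^M$ containing $\pi^{-1}(p)$ and, using that the conditions of this poset lie in $H_{\bar\kappa}^M$ and are therefore fixed by $\sigma$, extend $g$ to an $N$-generic $h$ for $\mathrm{Col}(\omega, <\sigma(\bar\kappa))^N$ and lift $\sigma$ to an elementary $\hat\sigma: M[g] \to N[h]$. The clauses $M = H_{\bar\theta}^N$ and $\bar\theta$ regular in $N$ guarantee that inside $N[h]$ the model $M[g]$ appears as $H_{\bar\theta}^{N[h]}$ with $\bar\theta$ collapsed to be countable, which is exactly the shape of the domain of a condensing map. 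Transferring this configuration across $\pi$, which sends $\bar\kappa$ to $\kappa$ and $\bar\theta$ to the preimage of $\theta$, should produce, after choosing the generic $G$ to cohere with $h$, a condensing $X = \mathrm{ran}(\rho)$ closed under $f$; since $p$ and $\dot f$ were arbitrary, the condensing set is forced to be stationary.

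For the converse I would force with $\mathrm{Col}(\omega, <\kappa)$ to obtain $G$ and, using generic stationarity, fix a remarkably condensing $X = \mathrm{ran}(\pi)$, say
\[
\pi: (H_\beta^{V[G \cap H_\alpha^V]}, \in, H_\beta^V, G \cap H_\alpha^V) \longrightarrow (H_\theta^{V[G]}, \in, H_\theta^V, G),
\]
with $\alpha = \mathrm{crit}(\pi) < \beta < \kappa$ and $\beta$ regular in $V$. Restricting $\pi$ to the predicate for the ground model gives an elementary $\pi \restriction H_\beta^V : H_\beta^V \to H_\theta^V$ with critical point $\alpha$ and $\pi(\alpha) = \kappa$, the latter because the displayed generic on the left is $\mathrm{Col}(\omega, <\alpha)$-generic while that on the right is $\mathrm{Col}(\omega, <\kappa)$-generic. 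Because $\theta > \kappa$, the set $H_\beta^V$ is countable in $V[G]$, so I may take $M := H_\beta^V$, $\bar\kappa := \alpha$, $\bar\theta := \beta$, and read off the required $\pi: M \to H_\theta^V$. The remaining task is to manufacture the self-embedding $\sigma: M \to N$: I would extract it from the full condensation map, whose domain $H_\beta^{V[G \cap H_\alpha^V]}$ sees $M = H_\beta^V$ as its collection of sets hereditarily of size $<\beta$ and in which $\beta$ remains regular since $\mathrm{Col}(\omega, <\alpha)$ has size $<\beta$, and then collapse to a countable $N$. Finally, the whole configuration of countable transitive models and embeddings into the fixed set $H_\theta^V$ lives in $V[G]$, so by the tree-absoluteness described above it already exists in $V$, witnessing that $\kappa$ is remarkable.

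I expect the main obstacle to be the careful coordination of the Levy-collapse generics across the two embeddings in the first direction: one must choose the generics $g$, $h$, and ultimately $G$ so that the lifts $\hat\sigma$ and the transfer along $\pi$ are simultaneously well-defined, elementary, and land in a single genuine $\mathrm{Col}(\omega, <\kappa)$-extension whose predicates match the template $(H_\theta^{V[G]}, \in, H_\theta^V, G)$, all while arranging closure under the given $f$. The bookkeeping that $M = H_{\bar\theta}^N$ with $\bar\theta$ regular in $N$ and $\sigma(\bar\kappa) > \bar\theta$ is precisely what makes the collapsed image of $M$ have the correct form $H_{\bar\theta}^{N[h]}$, and verifying that these clauses propagate correctly through the lifting is the crux. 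The converse direction is comparatively soft once the absoluteness lemma is in place, the only delicate point there being the construction of $\sigma$ from the condensation data.
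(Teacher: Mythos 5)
The paper itself does not prove this lemma but cites it from \cite{Schindler 2}; measured against the published argument, your skeleton (tree absoluteness for embeddings of countable structures, the $g\subseteq h$ lift of $\sigma$ in the forward direction, restriction of the condensing map to the ground--model predicate in the converse) is the right one, but two of your concrete steps fail as written. In the forward direction: first, you invoke remarkability at $\lambda$ ``with $\kappa,\theta,p,\dot f\in\mathrm{ran}(\pi)$,'' but the definition only supplies \emph{some} $\pi\colon M\rightarrow H_\lambda$ with $\pi(\bar\kappa)=\kappa$; prescribing further elements of $H_\lambda$ in the range is a genuine strengthening that needs its own proof (you cannot just enlarge the hull, since that destroys the companion $(\sigma,N)$--package). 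Second, and more seriously, your endgame of ``choosing the generic $G$ to cohere with $h$'' cannot work: in the statement to be proved, $G$ is an arbitrary $V$--generic filter, whereas $g,h\in V$ are generic only over the countable $M,N$, and no single condition $q\in Col(\omega,<\kappa)$ can force $\pi\mbox{''}g\subseteq \dot G$ (the union of $\pi\mbox{''}g$ is already an infinite function); moreover, by definition the domain--side predicate of a remarkably condensing $X$ must be $G\cap H_\alpha^V$, an initial segment of the \emph{true} generic, so a configuration built in $V$ cannot be transplanted into $V[G]$ at all. The correct finish stays below: note that the domain of $\hat\sigma\upharpoonright H_{\bar\theta'}^{M[g]}$ is $(H_{\bar\theta'}^{N[g]},\in,H_{\bar\theta'}^N,g)$ with $g=h\cap H_{\bar\kappa}^N$ genuinely an initial segment of $h$ (your ``$M[g]$ appears as $H_{\bar\theta}^{N[h]}$'' is off --- it is $H_{\bar\theta'}^{N[g]}$), use your tree absoluteness \emph{inside the transitive model} $N[h]$ to conclude that $N[h]$ itself satisfies ``some $X$ condenses remarkably and is closed under $\sigma(\dot{\bar f})^h$,'' pull this back along $\hat\sigma$ to $M[g]$, convert by $M$--genericity of $g$ into ``some $\bar q\leq\bar p$ forces this over $M$,'' and push up through $\pi$ to get $q\leq p$ forcing the existence over $V$; density then yields stationarity.

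In the converse your construction of $\sigma$ is wrong as described. Any $N$ manufactured from (a collapse of) the domain $H_\beta^{V[G\cap H_\alpha^V]}$ has $N\cap Ord=\beta=\bar\theta$, so it can never satisfy $\sigma(\bar\kappa)>\bar\theta$ since $\sigma(\bar\kappa)\in N\cap Ord$; also, in an $H_\beta$ \emph{every} element is hereditarily of size $<\beta$, so that model does not ``see $H_\beta^V$'' as its $H_\beta$ in the way you claim. The $N$ must be taller than $\beta$ and comes from the \emph{target}: in $V[G]$ the set $H_\beta^V$ is countable, so pick a countable $Y\prec H_\theta^V$ with $H_\beta^V\cup\{\beta,H_\beta^V\}\cup\mathrm{ran}(\pi\upharpoonright H_\beta^V)\subseteq Y$, let $c\colon Y\rightarrow N$ be the transitive collapse, and set $\sigma:=c\circ(\pi\upharpoonright H_\beta^V)$. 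Then $c\upharpoonright H_\beta^V=\mathrm{id}$, so $crit(\sigma)=\alpha$ and $\sigma(\alpha)=c(\kappa)>c(\beta)=\beta$, while $M:=H_\beta^V=H_\beta^N$ with $\beta$ regular in $N$ by elementarity; the same map $\pi\upharpoonright H_\beta^V$ serves as the embedding into $H_\theta^V$ with $\alpha\mapsto\kappa$. With this repair, your final appeal to tree absoluteness (suitably enriched so the tree searches simultaneously for $M$, $N$, both embeddings, and the side conditions $M=H_{\bar\theta}^N$, $\bar\theta$ regular in $N$) does correctly pull the countable configuration from $V[G]$ down to $V$, which is exactly how the cited proof concludes.
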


From Lemma \ref{key lemma on remarkable cn}, $\kappa$ is remarkable in $L$ if and only if for any $L$-cardinal $\mu\geq\kappa$, for any $G$ which is $Col(\omega, <\kappa)$-generic over $L$, we have $L[G]\models$ ``$S_{\mu}=\{X \prec L_{\mu}
| X$ is countable and $o.t.(X \cap \mu)$ is an $L$-cardinal\} is stationary.''

All the following facts on remarkable cardinals are from \cite{Schindler 2}: every  remarkable cardinal is remarkable in $L$; 
%every remarkable cardinal is totally indescribable; 
every remarkable cardinal $\kappa$ is $n$-ineffable for every $n<\omega$;  if $0^{\sharp}$ exists, then every Silver indiscernible is remarkable in $L$; if there exists a $\omega$-Erd\"os cardinal, then there exist $\alpha<\beta<\omega_1$ such that $L_{\beta}\models \mbox{``}ZFC+\alpha$ is remarkable.''

%The hierarchy of  large cardinals compatible with $L$ in terms of consistency strength is %as follows: $\alpha$-Erd$\ddot{o}$s cardinal($\omega<\alpha<\omega_1) >\omega$-%Erd$\ddot{o}$s cardinal $>n$-iterable cardinal($2<n\in\omega)$ $>$ 2-%iterable cardinal $>$  remarkable cardinal $>$ 1-iterable cardinal $>$ totally %ineffable cardinal $>$ $n$-ineffable cardinal $>$ $n$-subtle cardinal $>$ unfoldable %cardinal $>$ totally indescribable cardinal $>$  $\Pi^n_m$-indescribable $>$ weakly %compact $>$ Mahlo cardinal $>$ reflecting cardinal $>$  inaccessible cardinal.

\section{The strength of Harrington's Principle over higher order arithmetic}

\subsection{The strength of $Z_2\,+$ Harrington's Principle}

%In this section, we prove that $Z_2\, +$ Harrington's Principle is equiconsistent with %${\sf ZFC}$ via class forcing.

\begin{theorem}\label{main result about SOA}
$Z_2\,+$ {\sf HP} is equiconsistent with $ZFC$.
\end{theorem}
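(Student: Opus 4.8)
The plan is to prove equiconsistency in both directions.

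For the easy direction, I need to show that $\mathrm{Con}(Z_2 + \mathsf{HP})$ implies $\mathrm{Con}(\mathsf{ZFC})$. By Silver's theorem (Theorem \ref{Silver's theorem} above), $\mathsf{HP}$ implies that $0^\sharp$ exists, and the existence of $0^\sharp$ gives a well-founded model of $\mathsf{ZFC}$ (indeed much more). So the consistency of $Z_2 + \mathsf{HP}$ yields the consistency of $\mathsf{ZFC}$.

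I need to construct a model.

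Let me think about the harder direction more carefully.The plan is to prove both directions of the equiconsistency. For the easy direction, I would argue that $\mathrm{Con}(Z_2 + \mathsf{HP})$ implies $\mathrm{Con}(\mathsf{ZFC})$. This is immediate from the results already quoted: by Silver's Theorem (Theorem \ref{Silver's theorem}), working in $Z_2$ the principle $\mathsf{HP}$ implies that $0^\sharp$ exists, and from $0^\sharp$ one obtains (transitive) models of $\mathsf{ZFC}$, so in particular $\mathsf{ZFC}$ is consistent. Hence $\mathrm{Con}(Z_2 + \mathsf{HP}) \to \mathrm{Con}(\mathsf{ZFC})$ is essentially free.

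The substantive direction is $\mathrm{Con}(\mathsf{ZFC}) \to \mathrm{Con}(Z_2 + \mathsf{HP})$. Since $Z_2$ is just $\mathsf{ZFC}^- +$ ``every set is countable,'' a model of $Z_2$ is essentially $H_{\omega_1}$ of some model of (enough of) set theory, equipped with a distinguished real. The goal is to produce, from a model of $\mathsf{ZFC}$, a single real $x$ such that every $x$-admissible ordinal is an $L$-cardinal, and then to read off a model of $Z_2$ from the reals constructed. The natural strategy is to force over $L$ (or over a suitable model): starting from $\mathsf{Con}(\mathsf{ZFC})$ I may assume $V = L$ (since $L \models \mathsf{ZFC}$ and $\mathsf{HP}$ and the whole construction is absolute enough to carry out there), and then build the witnessing real $x$ by a coding forcing. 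The key idea, signalled by the keywords ``almost disjoint forcing, class forcing, reshaping,'' is to arrange that the $x$-admissible ordinals are exactly (or at least a subset of) the $L$-cardinals. Concretely I would first perform a reshaping forcing to make the relevant initial segments of the universe codable by reals while preserving cardinals, and then use almost disjoint coding to code a predicate for the $L$-cardinals into a single real $x$, so that from $x$ one can recover, at each admissible level, exactly the information that forces admissibility to coincide with being an $L$-cardinal.

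The heart of the argument, and the step I expect to be the main obstacle, is verifying the admissibility–cardinal correspondence for the real $x$ I construct: I must show that for every ordinal $\alpha$, if $\alpha$ is $x$-admissible then $\alpha$ is an $L$-cardinal. The delicate point is the direction that forces $x$-admissible ordinals to be $L$-cardinals — I must ensure the coding is dense enough that at any ordinal which is not an $L$-cardinal, the structure $L_\alpha[x]$ sees a cofinal map (or otherwise fails $\Sigma_1$-admissibility) witnessed by the coded information, so that non-cardinals are ``destroyed'' as admissible ordinals while genuine $L$-cardinals survive. Managing the reshaping so that it preserves $L$-cardinals, checking that the almost disjoint coding is sufficiently generic at every level, and confirming that the resulting real lies in a model of $Z_2$ (i.e.\ in the $H_{\omega_1}$ of the final forcing extension) while $\mathsf{HP}$ holds there, are the technically demanding pieces; once the real $x$ is in hand, passing to $H_{\omega_1}^{L[x]}$ (or the analogous countable-sets model) gives the desired model of $Z_2 + \mathsf{HP}$, completing the equiconsistency.
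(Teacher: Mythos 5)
Your easy direction contains a genuine error, and it concerns the central point of this paper. Silver's theorem (Theorem \ref{Silver's theorem}) is a theorem of {\sf ZF}; its proof does not go through in $Z_2$, and no proof can: Theorem \ref{cited theorem from Woodin} and Corollary \ref{the main corollary for paper} show that $Z_4$ is the least of these systems in which {\sf HP} yields $0^{\sharp}$, and even $Z_3\,+$ {\sf HP} does not. Concretely, if $Z_2\,+$ {\sf HP} proved ``$0^{\sharp}$ exists,'' then the model $L[x]$ built in the hard direction would contain a real coding an iterable premouse, whose iteration points $\kappa$ satisfy $L_{\kappa}\models {\sf ZFC}$ --- contradicting the minimality hypothesis (\ref{nozfc}) under which that model is constructed. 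So your appeal to Silver would, if valid, refute the very equiconsistency being proved. The correct easy direction is much softer and does not pass through $0^{\sharp}$ at all: if $x$ witnesses {\sf HP} in a $Z_2$ model, the $x$-admissible ordinals are unbounded in the ordinals of the model, hence so are the $L$-cardinals, and this suffices for $L\models {\sf ZFC}$; thus ${\rm Con}(Z_2\,+\,{\sf HP})$ yields ${\rm Con}({\sf ZFC})$.

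Your forcing direction has the right silhouette (force over $L$, almost disjoint coding into a real, verify that $x$-admissibles are $L$-cardinals), but both load-bearing mechanisms are missing. First, what gets coded: coding ``a predicate for the $L$-cardinals'' is not enough, since knowing which $\beta<\alpha$ are $L$-cardinals gives $L_{\alpha}[x]$ no cofinal map into a non-cardinal $\alpha$ and so does not defeat its admissibility. The paper instead forces with the class collapse $Col(\omega,<Ord)$ over $L$ --- which is what produces the $Z_2$ model; note there is no $H_{\omega_1}^{L[x]}$ to pass to, since in a $Z_2$ model $\omega_1$ does not exist and the class extension itself is the model --- and picks $A\subseteq Ord$ with $V=L[A]$ such that $A\cap[\lambda,\lambda+\omega)$ codes a well-ordering of $(\lambda^{+})^{L}$ for each $L$-cardinal $\lambda$. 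Then an $x$-admissible $\alpha$ that is not an $L$-cardinal, with $\lambda$ the largest $L$-cardinal below it, decodes $A\cap[\lambda,\lambda+\omega)$ inside $L_{\alpha}[x]$ and by admissibility sees the ordinal $(\lambda^{+})^{L}>\alpha$: contradiction. Second, decoding $A\cap\alpha$ over $L_{\alpha}[x]$ requires the almost disjoint family $(c_{\alpha}|\alpha\in Ord)$ to be canonically definable level by level, and here the paper does \emph{not} reshape: it assumes $L$ minimal, i.e.\ (\ref{nozfc}), whence $L_{\alpha+1}[A\cap\alpha]\models \alpha$ is countable for all $\alpha\geq\omega$ (this is (\ref{jaja})), which supplies the canonical family for free. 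Your reshaping plan is the device the paper reserves for {\sf HP}$(\varphi)$ (Theorem \ref{main result about SOA_2}), where minimality cannot be arranged; it could be made to work here too, but then you owe an $\omega$-distributivity proof for a class reshaping forcing --- precisely the step your sketch defers --- whereas the minimality trick eliminates it entirely.
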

\begin{proof}
It is easy to see that $Z_2\,+$ {\sf HP} implies $L\models ZFC$. \par
We now show that $Con(ZFC)$ implies $Con(Z_2\,+$ {\sf HP}). We assume that $L$ is a minimal model of $ZFC$, i.e., 
\begin{eqnarray}\label{nozfc}
\mbox{ there is no $\alpha$ such that $L_{\alpha}\models ZFC$.}
\end{eqnarray}
Let $G$ be $Col(\omega, <Ord)$-generic over $L$. Then $L[G]\models Z_2$. In $L[G]$,
we may pick some $A\subseteq Ord$ such that $V=L[A]$ and if $\lambda \geq
\omega$ is an $L$-cardinal, then $A\cap[\lambda, \lambda+\omega)$ codes a well ordering of $(\lambda^{+})^{L}$. By (\ref{nozfc}) we will then have that
for all $\alpha\geq\omega$, 
\begin{eqnarray}\label{jaja}
L_{\alpha+1}[A\cap\alpha]\models\alpha \mbox{ is countable. }
\end{eqnarray} 
By (\ref{jaja}) there exists then a canonical
sequence $(c_{\alpha}| \alpha\in Ord)$ of pairwise almost disjoint subset of $\omega$ such that $c_{\alpha}$ is the $L_{\alpha+1}[A\cap\alpha]$-least subset of $\omega$ such that $c_{\alpha}$ is almost disjoint from every member of $\{c_{\beta} | \beta<\alpha\}$. Do almost disjoint forcing to code $A$ by a real (i.e., a 
subset of $\omega$) $x$ such that for any $\alpha\in Ord, \alpha\in A\Leftrightarrow |x\cap c_{\alpha}|<\omega$. (Cf.\ e.g.\ \cite[\S 1.2]{beller-jensen-welch}.)
This forcing is $c.c.c$. Note that $L[A][x]=L[x]$ and $L[x]\models Z_2$. \par
We claim that {\sf HP} holds in $L[x]$. It suffices to show that if $\alpha$ is $x$-admissible, then $\alpha$ is an $L$-cardinal. Suppose $\alpha$ is $x$-admissible but is not an $L$-cardinal. Let $\lambda$ be the largest $L$-cardinal $<\alpha$. Note that we can define $A\cap\alpha$ over $L_{\alpha}[x]$. Since $A\cap[\lambda, \lambda+\omega)\in L_{\alpha}[x]$ and $A\cap[\lambda, \lambda+\omega)$ codes a well ordering of $(\lambda^{+})^{L}$, we have $(\lambda^{+})^{L}\in L_{\alpha}[x]$, as $\alpha$ is $x$--admissible. 
But $(\lambda^{+})^{L}>\alpha$. Contradiction! So $L[x]\models Z_2+$ {\sf HP}.%\footnote{We can build a set model of $Z_2+$ {\sf HP} assuming there exists a weakly %inaccessible cardinal via set forcing without reshaping as the first author originally %proved. Via class forcing we can get this better equiconsistency result.}
\end{proof}

%\begin{corollary}
%$Z_2\,+$ {\sf HP} does not imply $0^{\sharp}$ exists.
%\end{corollary}
%\begin{proof}
%Follows from Theorem \ref{main result about SOA} or Corollary \ref{the main corollary %for paper}(by an easy absoluteness argument we can show that if $Z_2\,+$ {\sf HP} %implies $0^{\sharp}$ exists, then $Z_3\,+$ {\sf HP} implies $0^{\sharp}$ exists).
%\end{proof}

\subsection{The strength of $Z_3\,+$ Harrington's Principle}

%We are now going to prove the equiconsistency of
%$Z_3\,+$ {\sf HP} with {\sf ZFC} plus ``there is a remarkable cardinal''.

\begin{theorem}\label{the main strength result}
The following two theories are equiconsistent:
\begin{enumerate}[(1)]
  \item $Z_3\,+$ {\sf HP}.
  \item $ZFC\,+$ there exists a remarkable cardinal.
\end{enumerate}
\end{theorem}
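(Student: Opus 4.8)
The plan is to prove the two consistency implications separately, using in both directions the reformulation of remarkability recorded after Lemma \ref{key lemma on remarkable cn}: $\kappa$ is remarkable in $L$ if and only if for every $L$-cardinal $\mu \geq \kappa$ and every $\mathrm{Col}(\omega,<\kappa)$-generic $G$ over $L$ one has $L[G] \models$ ``$S_\mu$ is stationary'', where $S_\mu = \{X \prec L_\mu : X \text{ countable and } o.t.(X\cap\mu) \text{ is an } L\text{-cardinal}\}$.

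\textbf{From (2) to (1) (the construction).} Since every remarkable cardinal is remarkable in $L$, I may assume $V = L$ and fix a remarkable $\kappa$. First I collapse, letting $G$ be $\mathrm{Col}(\omega,<\kappa)$-generic, so that $\kappa = \omega_1^{L[G]}$, $\mathsf{CH}$ holds, and every $L[G]$-cardinal $\geq \kappa$ is an $L$-cardinal. Working in $L[G]$ I would fix a set $A \subseteq (\kappa^+)^L$ which codes $G$ and which, for every $L$-cardinal $\lambda < (\kappa^+)^L$, has $A \cap [\lambda,\lambda+\omega)$ coding a well-order of $(\lambda^+)^L$; then I would code $A$ by a real $x$ via the iterated reshaping and almost disjoint forcing of Jensen-style coding (the single step of which appears already in Theorem \ref{main result about SOA}), arranged so that the whole iteration preserves cardinals and so that $A \cap \alpha$ is definable over $L_\alpha[x]$ for every $\alpha < (\kappa^+)^L$. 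The desired model is $\mathcal{M} = H_{(\kappa^+)^L}^{L[G][x]}$, which satisfies $ZFC^- + \mathcal{P}(\omega)$ exists $+$ every set has cardinality $\leq \beth_1$, i.e.\ $Z_3$. That $\mathsf{HP}$ holds in $\mathcal{M}$, with witness $x$, is then the argument of Theorem \ref{main result about SOA} transplanted above $\kappa$: if $\alpha < (\kappa^+)^L$ were $x$-admissible but not an $L$-cardinal, let $\lambda$ be the largest $L$-cardinal below $\alpha$; by local definability $A \cap [\lambda,\lambda+\omega) \in L_\alpha[x]$, so by $x$-admissibility $(\lambda^+)^L \in L_\alpha[x]$, contradicting $(\lambda^+)^L > \alpha$. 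This simultaneously rules out $x$-admissible ordinals in $(\kappa,(\kappa^+)^L)$, where $\lambda = \kappa$, so that the only $x$-admissible ordinals of $\mathcal{M}$ are $L$-cardinals $\leq \kappa$.

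\textbf{From (1) to (2) (the lower bound).} Let $M \models Z_3 + \mathsf{HP}$, fix a witness $x$, and put $\kappa = \omega_1^M$; I claim $\kappa$ is remarkable in $L = L^M$. Since $\kappa$ is regular in $M$, $L_\kappa[x]$ is an admissible structure, so for club-many countable $X \prec (L_\kappa[x],\in,x)$ the transitive collapse has the form $L_{\bar\kappa}[x]$ with $\bar\kappa = o.t.(X\cap\kappa)$ again admissible; hence each such $\bar\kappa$ is $x$-admissible and so, by $\mathsf{HP}$, an $L$-cardinal. Thus $M \models$ ``$S_\kappa$ is stationary''. It then remains to transfer this to the genuine collapse extension demanded by the characterization: were $\kappa$ not remarkable in $L$, homogeneity of $\mathrm{Col}(\omega,<\kappa)$ would yield an $L$-name for a club in $[L_\kappa]^\omega$ disjoint from $S_\kappa$, and one must contradict the stationarity just established inside $M$. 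Granting this, $L^M \models ZFC + \kappa$ is remarkable, which is the desired consistency.

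\textbf{Main obstacles.} On the construction side the critical point is that the reshaping used to code the collapsed cardinals below $\kappa$ into a subset of $\kappa$ must not collapse $\omega_1 = \kappa$; this is precisely where remarkability is consumed, via the stationarity of $S_\kappa$ furnished by Lemma \ref{key lemma on remarkable cn}, which is what allows reshaping conditions to be extended through limit stages. On the lower-bound side the delicate step is the transfer of the stationarity of $S_\kappa$ computed inside $M$ to a true $\mathrm{Col}(\omega,<\kappa)$-generic extension of $L$, the difficulty being that $M$ need not contain any such generic; I expect to handle this by an absoluteness argument for the forcing relation together with the homogeneity of the collapse. The remaining verifications (that $\mathsf{CH}$ and the cardinal structure survive the coding, and that $H_{(\kappa^+)^L}$ of the final model indeed models $Z_3$) are routine.
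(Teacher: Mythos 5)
You have genuine gaps in both directions; the more serious one kills the construction. In the direction from (2) to (1), your choice of target model ${\mathcal M}=H^{L[G][x]}_{(\kappa^+)^L}$ can never satisfy {\sf HP}. For $H_{(\kappa^+)^L}$ to satisfy $ZFC^-$, the ordinal $(\kappa^+)^L$ must remain a cardinal in $L[G][x]$, and then there are club--in--$(\kappa^+)^L$ many $\alpha\in(\kappa,(\kappa^+)^L)$ with $L_\alpha[x]\prec L_{(\kappa^+)^L}[x]$; every such $\alpha$ is $x$--admissible but none is an $L$--cardinal, since $\kappa$ is the largest $L$--cardinal below $(\kappa^+)^L$. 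More generally, in any transitive model of $Z_3$ containing $x$ the $x$--admissibles are unbounded in the model's ordinals, so {\sf HP} forces the $L$--cardinals to be unbounded there as well --- which fails cofinally in your ${\mathcal M}$. Your attempted repair, letting $A\cap[\kappa,\kappa+\omega)$ code a well--order of $(\kappa^+)^L$, is impossible: a subset of an $\omega$--block is essentially a real and cannot code an ordinal that is uncountable in $L[G]$. This is exactly why the paper, after $Col(\omega,<\kappa)$, forces further with the \emph{class} forcing $Col(\kappa,<Ord)$ and takes the class universe $L[x]$ as its $Z_3$ model: there the $L$--cardinals are unbounded, every $(\lambda^{+})^L$ has size $\leq\aleph_1$, so $\omega_1$--blocks can carry the well--orders, and the minimality hypothesis (\ref{minimality}) yields (\ref{jajajaja}), which substitutes for the upper reshaping you would otherwise need on $[\kappa,(\kappa^+)^L)$. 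Note also that where remarkability is consumed --- the $\omega$--distributivity of the lower reshaping, Claim \ref{forcing preserve oemga} --- what is used is the stationarity of $S_\mu$ for \emph{arbitrarily large} $L$--cardinals $\mu$ (one takes a countable hull of $L_{(\nu^+)^L}[A]$ with $\nu$ above the given dense sets, so that the collapsed height is an $L$--cardinal and (\ref{key_thing}) holds), not merely the stationarity of $S_\kappa$ as your obstacles paragraph suggests.

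In the direction from (1) to (2), your argument only establishes the stationarity of $S_\kappa$, i.e.\ the single case $\mu=\kappa$, and only with respect to clubs in $M$. The characterization in Lemma \ref{key lemma on remarkable cn} quantifies over \emph{all} regular $\theta>\kappa$ and demands stationarity of the remarkably condensing $X\in[H^{L[G]}_\theta]^\omega$ inside $L[G]$, against clubs (equivalently, functions $f\in L[G]$) that need not exist in $M$; with only $\theta$ near $\kappa$ you obtain a property strictly weaker than remarkability. Moreover, your ``transfer'' step via homogeneity is not where the work lies and is dispensable: the paper simply forces with $Col(\omega,<\omega_1^M)$ over $M$ itself, and given any $f\in L[G]$ and regular $\theta$ of $L$, takes a countable $X\prec L_{\theta^{+L}}[x][G]$ in $M[G]$ with $f\in X$; the transitive collapse has the form $L_{\bar\eta}[x][G\cap L_\alpha[x]]$, the witness $x$ sitting inside the hull makes $\bar\eta$ $x$--admissible, hence an $L$--cardinal by {\sf HP}, hence $o.t.(X\cap\theta)$ a regular $L$--cardinal, and then absoluteness of the existence of such an elementary embedding between $M[G]$ and $L[G]$ lands the required condensing hull, closed under $f$, inside $L[G]$. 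That argument handles all $\theta$ uniformly, which is the content your sketch is missing.
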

\begin{proof}
We first prove that $Z_3\,+$ {\sf HP} implies $L \models ZFC\,+$ there exists a remarkable cardinal. Assume $Z_3\, +\, $  {\sf HP}.
It is easy to verify that $L\models ZFC$. We now want to show that
$\omega_1^V$ is remarkable in 
$L$. Suppose $L\models \theta>\omega_1^V$ is regular,
and set $\eta=\theta^{+L}$. Let $x \in 2^\omega$ witness {\sf HP},
and let $G$ be $Col(\omega,<\omega_1^V)$-generic over $V$.
Let $f : [L_\theta[G]]^{<\omega}
\rightarrow L_\theta[G]$, $f \in L[G]$, and let 
$X\prec L_{\eta}[x][G]$ be such that $|X|=\omega, \{ \omega_1 , \theta , f \}
\subseteq  X$. Let $\tau: L_{\bar{\eta}}[x][G \cap L_\alpha[x]]\cong X$ be the collapsing map, where $\alpha = crit(\tau)$, $\tau(\alpha)=\omega_1^V$, and $\tau({\bar f})
= f$. As
${\bar{\eta}}$ is $x$-admissible, ${\bar{\eta}}$ is an $L$-cardinal
by the choice of $x$ as witnessing {\sf HP}, and hence
$\beta = o.t.(X \cap \theta) = \tau^{-1}(\theta)$
is a regular $L$-cardinal.
Therefore, $X \cap L_\theta[G]$ 
condenses remarkably. By absoluteness, 
there is {\em in} 
$L[G]$ some elementary ${\bar \tau} \colon L_{\bar \eta}[G \cap L_\alpha] 
\rightarrow L_\eta[G]$ such that ${\bar \tau}(\beta)=\theta$ and ${\bar \tau}({\bar
f})=f$. I.e., in
$L[G]$, there is some 
$X \in [H_\theta^{L[G]}]^\omega
\cap L[G]$ which condenses remarkably and is closed under $f$. Hence
$\omega_1^V$ is
remarkable in $L$ by Lemma \ref{key lemma on remarkable cn}.

We now prove that the consistency of (2) implies the consistency of (1).

We assume that $L\models ``ZFC\,+ \kappa$ is a remarkable cardinal" and 
\begin{eqnarray}\label{minimality} \ \ \ \ \ \ 
\mbox{ there is no $\alpha$ such that $L_{\alpha}\models ``ZFC\,+ \kappa$ is a remarkable cardinal.'' }
\end{eqnarray} 
%By our assumption on the minimality of $L$, if $\lambda>\kappa$ is a limit cardinal in %$L$, then $\lambda$ is singular in $L_{\lambda+1}$(otherwise, $L_{\lambda}\models %``ZFC\,+ \kappa$ is a remarkable cardinal''). 
In what follows, we shall write $S_\mu$ for $$\{X\in [L_{\mu}]^{\omega} | X\prec L_{\mu} \mbox{ and } o.t.(X\cap\mu) \mbox{ is an } L \mbox{-cardinal } \}{\rm , }$$
as defined in the respective models of set theory which are to be consiederd. \par
Let $G$ be $Col(\omega,<\kappa)$-generic over $L$. Since $\kappa$ is remarkable in $L$, $L[G]\models \mbox{``} S_\mu$ is stationary for any $L$-cardinal $\mu\geq\kappa$." Let $H$ be $Col(\kappa,<Ord)$-generic over $L[G]$. Note that $Col(\kappa,<Ord)$ is countably closed. Standard arguments give that 
\begin{eqnarray}\label{jajaja}
\ \ \ \ \ \ \ \ \ 
L[G][H]\models Z_3+ S_{\mu} \mbox{ is stationary for all $L$-cardinals } \mu\in Card^{L}\setminus (\kappa+1).
\end{eqnarray}
In $L[G][H]$, we may pick 
some $B\subseteq Ord$ such that $V=L[B]$ and if $\lambda \geq
\omega_1$ is an $L$-cardinal, then $B\cap[\lambda, \lambda+\omega_1)$ codes a well ordering of $(\lambda^{+})^{L}$. By (\ref{minimality}) we will then have that
for all $\alpha\geq\omega_1$, 
\begin{eqnarray}\label{jajajaja}
L_{\alpha+1}[B\cap\alpha]\models Card(\alpha) \leq \aleph_1.
\end{eqnarray} 
By (\ref{jajajaja}), there exists then a canonical
sequence $(C_{\alpha}| \alpha\in Ord)$ of pairwise almost disjoint subsets 
of $\omega_1$ such that $C_{\alpha}$ is the $L_{\alpha+1}[B\cap\alpha]$-least subset of $\omega_1$ such that $C_{\alpha}$ is almost disjoint from every member of $\{C_{\beta} | \beta<\alpha\}$. Do almost disjoint forcing to code $B$ by some $A \subset \omega_1$ such that for any $\alpha\in Ord$, $\alpha\in B\Leftrightarrow |A\cap C_{\alpha}|<\omega_1$. This forcing is countably closed and has the $Ord$-$c.c$. Note that $L[B][A]=L[A]$ and $L[A]\models Z_3$. Also, 
\begin{eqnarray}\label{consequence_of_rem}
L[A]\models \mbox{``} S_\mu \mbox{ is stationary for any 
$L$-cardinal $\mu\geq\kappa$." }
\end{eqnarray}

Suppose $\alpha > \omega_1$ is $A$-admissible, but $\alpha$ is not an
$L$-cardinal. Let $\lambda$ be the largest $L$-cardinal $< \alpha$. Note that $\lambda+\omega_1<\alpha$ and 
we can compute $B \cap \alpha$ over $L_{\alpha}[A]$. Hence $B \cap [\lambda,\lambda+\omega_1) \in L_\alpha[A]$, and $B \cap [\lambda,\lambda+\omega_1)$
codes a well--ordering of $\lambda^{+L}$. So $\lambda^{+L}< \alpha$,
as $\alpha$ is $A$--admissible.  
Contradiction! We have shown that in $L[A]$,
\begin{eqnarray}\label{effectofadm}
\mbox{ every $A$--admissible ordinal above $\omega_1$ is
an $L$--cardinal. }
\end{eqnarray}

Now over $L[A]$ we do reshaping as follows. (Cf.\ e.g.\ 
\cite[\S 1.3]{beller-jensen-welch} on the original reshaping forcing.)

\begin{definition}
Define $p\in \mathbb{P}$ if and only if $p: \alpha\rightarrow 2$ for some $\alpha<\omega_1$ and $\forall\xi\leq\alpha \, \exists\gamma \, (L_{\gamma}[A\cap\xi, p\upharpoonright\xi]\models ``\xi$ is countable" and every $(A\cap\xi)$-admissible $\lambda\in [\xi, \gamma]$ is an $L$-cardinal).
\end{definition}
It is easy to check the extendability property of $\mathbb{P}$: $\forall p\in \mathbb{P}\, \forall\alpha<\omega_1\, \exists q\leq p \, (dom(q)\geq\alpha)$. Note that $|\mathbb{P}|=\aleph_1$, as ${\sf CH}$ holds true in $L[A]$. \par
We now vary an argument from \cite{S. Shelah}, cf.\ also \cite{Schindler 5},
to show the following.
\begin{claim}\label{forcing preserve oemga}
$\mathbb{P}$ is $\omega$-distributive.
%\footnote{$P$ is $\omega_1$-distributive if and only if every function $f: %\alpha\rightarrow V$  in the generic extension with $\alpha<\omega_1$ is in the %ground model.}
\end{claim}
\begin{proof}
Let $p\in \mathbb{P}$ and $\vec{D}=({D_n}| n\in\omega)$ be a sequence of open dense sets. Take $\nu>\omega_1$ such that  $\vec{D}\in L_{\nu}[A]$
and $L_\nu[A]$ is a model of a reasonable fragment of ${\sf ZFC}^-$.
By (\ref{effectofadm}) we have that 
\begin{eqnarray}\label{jajajajaja}
L_{\mu}[A]\models \mbox{``every $A$-admissible ordinal $\geq\omega_1$ is an $L$-cardinal,''}
\end{eqnarray} where $\mu=(\nu^{+})^{L}$. By (\ref{consequence_of_rem})
we can pick $X$ such that $\pi: L_{\bar{\mu}}[A\cap\delta]\cong X\prec L_{\mu}[A]$,
$|X|=\omega$, $\{p,\mathbb{P}, A, \vec{D}, \omega_1, \nu\}\subseteq X$,  $\bar{\mu}$ is an $L$-cardinal, and $\pi(\delta)=\omega_1$, $\delta=crit(\pi)$.
Note that (\ref{jajajajaja}) yields that 
$L_{\bar{\mu}}[A\cap\delta]\models$ ``every $A\cap\delta$-admissible ordinal $\geq\delta$ is an $L$-cardinal''. Since $\bar{\mu}$ is an $L$-cardinal, we have 
that 
\begin{eqnarray}\label{key_thing}
\mbox{ every  $A\cap\delta$-admissible $\lambda\in [\delta, \bar{\mu}]$ is an $L$-cardinal.}
\end{eqnarray} 
This is the key point.
Let $\pi(\bar{\nu})=\nu, \pi(\bar{\mathbb{P}})=\mathbb{P}$ and $\pi(\bar{D})= \vec{D}$ with  $\bar{D}=({\bar D}_n| n\in\omega)$.

By (\ref{jajajaja}) we may let 
$(E_i| i<\delta)\in L_{\bar{\mu}}[A\cap\delta]$ be an enumeration of all clubs in $\delta$ which exist in $L_{{\bar \nu}}[A\cap\delta]$. Let 
$E$ be the diagonal intersection of $(E_i| i<\delta)$. Note that $E\setminus E_i$ is bounded in $\delta$ for all $i<\delta$. In $L[A]$, let us pick a strictly
increasing sequence $(\epsilon_n | n<\omega)$ such that
$\{ \epsilon_n | n<\omega \} \subseteq E$ and $(\epsilon_n | n<\omega)$
is cofinal in $\delta$.

We want to find a $q\in \mathbb{P}$ such that $q\leq p$, $dom(q)=\delta$, $L_{\bar{\mu}}[A\cap\delta, q]\models``\delta$ is countable," and $q\in {\bar D}_n$ for all $n\in\omega$.
For this we construct a sequence $(p_n| n\in\omega)$ of conditions such that $p_0=p$, $p_{n+1}\leq p_n$ and $p_{n+1}\in {\bar D}_n =
D_n \cap L_{\bar \nu}[A \cap \delta]$ 
for all $n\in\omega$. Also we construct a sequence $\{\delta_n| n\in\omega\}$ of ordinals. Suppose $p_n\in 
L_{\bar \nu}[A\cap\delta]$ is given. Let $\gamma=dom(p_n)$. Note that $\gamma<\delta$ since $p_n\in L_{\bar \nu}[A\cap\delta]$. Now we work in 
$L_{\bar \nu}[A\cap\delta]$. By extendability, for all $\xi$ with $\gamma\leq\xi<\delta$ we may pick some $p^{\xi}\leq p_n$ such that $p^{\xi}\in {\bar D}_n$, $dom(p^{\xi})>\xi$, and for all limit ordinals $\lambda$ with $\gamma\leq\lambda\leq\xi$ we have $p^{\xi}(\lambda)=1$ if and only if $\lambda=\xi$. There exists $C\in L_{\bar \nu}[A\cap\delta]$ which is a club in $\delta$ such that for all $\eta\in C, \xi<\eta$ implies $dom(p^{\xi})<\eta$.

Now we work in $L_{\bar{\mu}}[A\cap\delta]$. We may pick some $\eta\in E$,
$\eta \geq \epsilon_n$, such that $E\setminus C\subseteq\eta$. Let $p_{n+1}=p^{\eta}$ and $\delta_n=\eta$. Note that $p_{n+1}\leq p_n$ and $p_{n+1}\in {\bar D}_n$. Also $dom(p_{n+1})<min(E\setminus (\delta_n+1))$ so that for all limit ordinals $\lambda\in E\cap (dom(p_{n+1})\setminus dom(p_n))$, we have $p_{n+1}(\lambda)=1$ if and only if $\lambda=\delta_n$.

Now let $q=\bigcup_{n\in\omega} p_n$. We need to check that $q\in \mathbb{P}$. Note that $dom(q)=\delta$. 
By (\ref{key_thing}) it
suffices to check that $L_{\bar{\mu}}[A\cap\delta, q]\models\delta$ is countable. From the construction of the $p_n$'s we have $\{\lambda\in E\cap (dom(q)\setminus dom(p)) | \lambda$ is a limit ordinal and $q(\lambda)=1\}=\{\delta_n | n\in\omega\}$, which is cofinal in $\delta$, as $\delta_n \geq \epsilon_n$ for all $n<\omega$. 
Recall that $E\in L_{\bar{\mu}}[A\cap\delta, q]$. So $\{\delta_n|n\in\omega\}\in L_{\bar{\mu}}[A\cap\delta, q]$ witnesses that $\delta$ is countable in $L_{\bar{\mu}}[A\cap\delta, q]$.
\end{proof}

The proof of Claim \ref{forcing preserve oemga} can be adapted to show that $\mathbb{P}$ is stationary preserving, cf.\ \cite{Schindler 5}. \par
Forcing with $\mathbb{P}$ adds some $F: \omega_1\rightarrow 2$ such that for all $\alpha<\omega_1$ there exists $\gamma$ such that $L_{\gamma}[A\cap\alpha, F\upharpoonright\alpha]\models \alpha$ is countable and every $(A\cap\alpha)$-admissible $\lambda\in[\alpha, \gamma]$ is an $L$-cardinal; for each $\alpha<\omega_1$ let $\alpha^{\ast}$ be the least such $\gamma$.  Let $D=A\oplus F$. We may assume that  for any $L$-cardinal $\lambda < \omega_1^V$,  
$D$ restricted to odd ordinals in $[\lambda, \lambda+\omega)$ codes a well ordering of the least $L$-cardinal $>\lambda$. By Claim \ref{forcing preserve oemga}, $L[A][F] = L[D] \models Z_3$.

%\begin{fact}\label{forcing preserving system lemma}
%(Folklore, \cite{Woodin first})\quad Suppose $M \models  Z_3, P\in M$ is a forcing notion %with $|P|\leq\omega_1$ and $G$ is $P$-generic over $M$. If $\omega_1$ is preserved, %then $M[G]\models Z_3$.
%\end{fact}
%
%Since $\mathcal{P}$ is $\omega_1$-distributive, it is easy to see that $L[D]\models Z_3$.

Now we do almost disjoint forcing over $L[D]$ to code $D$ by a real $x$. There exists a 
canonical sequence $(x_{\alpha} | \alpha<\omega_1)$ of pairwise almost disjoint subset of $\omega$ such that $x_{\alpha}$ is the $L_{\alpha^{\ast}}[D\cap\alpha]$-least subset of $\omega$ such that $x_{\alpha}$ is almost disjoint from every member of $\{x_{\beta} | \beta<\alpha\}$.  
Almost disjoint forcing adds a real $x$ such that for all $\alpha<\omega_1$, $\alpha\in D$ if and only if $|x_{\alpha}\cap x|<\omega$. The forcing has the $c.c.c.$, and thus
$L[D][x]=L[x] \models Z_3$.

We finally claim that $L[x] \models {\sf HP}$. 
Suppose $\alpha$ is $x$-admissible. We show that $\alpha$ is an $L$-cardinal. If $\alpha\geq\omega_1$, then $\alpha$ is also $A$-admissible and hence is an $L$-cardinal by (\ref{effectofadm}).
Now we assume that $\alpha<\omega_1$ and $\alpha$ is not an $L$-cardinal. 
Let $\lambda$ be the largest $L$--cardinal $< \alpha$.
%, and $\bar{\lambda}=\min(C\setminus(\alpha+1)) > \alpha$. 
Recall that for $\xi < \omega_1$, $\xi^* > \xi$
is least such that
$L_{\xi^*}[A\cap\xi, F\upharpoonright\xi]\models \xi$ is countable. 
Every $(D\cap\xi)$-admissible $\lambda'\in [\xi, \xi^*]$
is an $L$--cardinal.

Case 1: For all $\xi<\lambda + \omega$, $\xi^*<\alpha$. Then
$D \cap (\lambda+\omega)$ can be computed inside $L_\alpha[x]$.
But then, as $\alpha$ is $x$--admissible, the ordinal coded by $D$ restricted
to the odd ordinals in $[\lambda,\lambda +\omega)$, namely 
the least $L$--cardinal $> \lambda$,
is in $L_\alpha[x]$, so that ${\lambda^{+L}}<\alpha$.
Contradiction!

Case 2: Not Case 1. 
Let $\xi<\lambda + \omega$ be least such that $\xi^* \geq \alpha$.
Then $D \cap \xi$ can be computed inside $L_\alpha[x]$. As $\alpha$ is
$x$--admissible, $\alpha$ is thus $(D \cap \xi)$--admissible also.
But all $(D\cap\xi)$-admissibles $\lambda'\in [\xi, \xi^*]$
are $L$--cardinals, so that $\alpha$ is an $L$--cardinal
by $\xi< \alpha \leq \xi^*$.
Contradiction!

We have shown that $L[x]\models Z_3\, +$ {\sf HP}. 
\end{proof}

%Note that for any $L$-cardinal $\mu\geq\kappa, S_{\mu}$ is stationary in %$L[G],L[G,H], L[A]$ and contains a club in $L[D]$ and in $L[x]$.

\begin{corollary}\label{the main corollary for paper}
$Z_3\,+$ {\sf HP} does not imply $0^{\sharp}$ exists.
\end{corollary}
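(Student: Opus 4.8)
The plan is to derive this as an immediate consequence of Theorem \ref{the main strength result} together with the standard fact that the existence of a remarkable cardinal is consistency-wise strictly weaker than the existence of $0^\sharp$. The key observation is that equiconsistency results transfer consistency statements: since $Z_3 + {\sf HP}$ is equiconsistent with ${\sf ZFC} +$ ``there exists a remarkable cardinal,'' it suffices to show that the latter theory does not prove $0^\sharp$ exists, and moreover that $Z_3 + {\sf HP}$ is strictly weaker in consistency strength than $0^\sharp$.

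First I would recall from the facts listed after Lemma \ref{key lemma on remarkable cn} that if $0^\sharp$ exists, then every Silver indiscernible is remarkable in $L$, and that the $\omega$-Erd\H{o}s cardinal already yields, inside some countable level $L_\beta$, a model of ${\sf ZFC} + $ ``there is a remarkable cardinal.'' In particular, the existence of $0^\sharp$ outright implies the consistency of ${\sf ZFC} +$ ``there exists a remarkable cardinal''; indeed it implies the consistency of much more. By G\"odel's second incompleteness theorem, ${\sf ZFC} +$ ``there exists a remarkable cardinal'' therefore cannot prove $Con({\sf ZFC} +$ ``there exists a remarkable cardinal''$)$, and hence cannot prove that $0^\sharp$ exists, since $0^\sharp$ exists implies that consistency statement.

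More carefully, the argument I would give is: suppose toward a contradiction that $Z_3 + {\sf HP}$ proved that $0^\sharp$ exists. By Theorem \ref{the main strength result}, $Con(Z_3 + {\sf HP})$ follows from $Con({\sf ZFC} +$ ``there exists a remarkable cardinal''$)$. Working inside the theory $T = {\sf ZFC} +$ ``there exists a remarkable cardinal,'' which (being at least as strong as ${\sf ZFC}$) proves its own version of the equiconsistency, one obtains a model of $Z_3 + {\sf HP}$, and in that model $0^\sharp$ exists by our assumption. But $0^\sharp$ existing implies $Con(T)$ via the $\omega$-Erd\H{o}s/Silver-indiscernible facts, so $T$ would prove $Con(T)$, contradicting the second incompleteness theorem. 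Hence $Z_3 + {\sf HP}$ cannot prove that $0^\sharp$ exists.

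The step I expect to require the most care is making the Second Incompleteness argument fully rigorous across the equiconsistency, specifically verifying that the relevant implication ``$0^\sharp$ exists $\longrightarrow Con(T)$'' is itself provable in $T$ (or in a weak base theory), and that the equiconsistency of Theorem \ref{the main strength result} is formalizable so that $T$ sees a model of $Z_3 + {\sf HP}$. Both points are routine given that all the facts on remarkable cardinals quoted above are theorems of ${\sf ZFC}$ and that the forcing constructions in Theorem \ref{the main strength result} are carried out by absolute, definable means; I would simply remark that the reductions are uniform and leave the bookkeeping to the reader, since the essential content is the combination of Theorem \ref{the main strength result} with the consistency-strength gap between remarkable cardinals and $0^\sharp$.
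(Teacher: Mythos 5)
Your overall strategy --- combine the equiconsistency of Theorem \ref{the main strength result} with the consistency-strength gap between remarkable cardinals and $0^{\sharp}$, and close the loop with G\"odel's second incompleteness theorem --- is exactly how the paper intends the corollary to be read (the paper offers no separate proof; the corollary is immediate from Theorem \ref{the main strength result} together with the facts quoted in Section 2 that $0^{\sharp}$ yields countable levels $L_\beta \models {\sf ZFC}\,+$ ``there is a remarkable cardinal''). However, one step of your careful version is wrong as literally written: you say that $T = {\sf ZFC}\,+$ ``there is a remarkable cardinal'' ``proves its own version of the equiconsistency'' so that, working inside $T$, ``one obtains a model of $Z_3 + {\sf HP}$.'' An equiconsistency at the meta-level does not transfer this way: $T$ does not prove $Con(Z_3 + {\sf HP})$, and indeed it \emph{cannot} prove the existence of a set model of $Z_3 + {\sf HP}$ --- if it did, then since the first direction of Theorem \ref{the main strength result} is an interpretation ($Z_3 + {\sf HP}$ proves $L \models T$), $T$ would prove $Con(T)$ outright, contradicting G\"odel II with no use of your hypothesis at all. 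What is actually available inside $T$ is a \emph{definable class interpretation} of $Z_3 + {\sf HP}$ with absolute arithmetic, obtained from the class-forcing construction of Theorem \ref{the main strength result}; and even this needs a small repair, since that construction assumes the minimality hypothesis (\ref{minimality}), so inside an arbitrary model of $T$ one must first pass to the least $L_\alpha \models T$ if one exists (and to $L$ itself otherwise) before forcing.

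There are two clean ways to finish. (i) Carry out the repair just described, replacing ``model'' by ``interpretation'' throughout; then your argument gives $T \vdash Con(T)$ and the contradiction. (ii) More simply, apply G\"odel II to $Z_3 + {\sf HP}$ itself: if $Z_3 + {\sf HP} \vdash$ ``$0^{\sharp}$ exists,'' then since $Z_3$ proves ``$0^{\sharp}$ exists $\rightarrow Con(T)$'' (the indiscernibility argument concerns only countable objects and is available in $Z_2$, where $0^{\sharp}$ is a countable iterable premouse) and since $Con(T) \rightarrow Con(Z_3 + {\sf HP})$ is provable in weak arithmetic by arithmetizing the forcing proof, $Z_3 + {\sf HP}$ would prove its own consistency and hence be inconsistent --- contradicting its consistency, which follows from the tacit hypothesis $Con(T)$. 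Note finally that this tacit hypothesis is genuinely needed (by you and by the paper): if $T$ is inconsistent, so is $Z_3 + {\sf HP}$, which then vacuously implies ``$0^{\sharp}$ exists''; your invocations of G\"odel II silently assume consistency of the relevant theories, and you should say so. A worthwhile alternative, closer to the construction itself, is to observe that under (\ref{minimality}) the witnessing model $L[x]$ of Theorem \ref{the main strength result} already satisfies ``$0^{\sharp}$ does not exist,'' since $0^{\sharp}$ in $L[x]$ would produce a level of $L$ modelling $T$; this yields $Con(Z_3 + {\sf HP}\, + \, \lnot\, 0^{\sharp}\mbox{ exists})$ directly, with no second use of incompleteness.
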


\subsection{$Z_4\,+$ Harrington's Principle implies $0^{\sharp}$ exists}

We construe the following as part of the folklore, cf.\ \cite{Harrington 1}.

\begin{theorem}\label{cited theorem from Woodin}
$(Z_4)$ \quad {\sf HP} implies $0^{\sharp}$ exists.
\end{theorem}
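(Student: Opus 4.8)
The plan is to carry out Silver's classical derivation of $0^{\sharp}$ from {\sf HP} inside $Z_4$, checking that every object it manipulates lives in $H_{\beth_2^+}$. First I would fix a real $x$ witnessing {\sf HP}. The route through remarkability used in Theorem \ref{the main strength result} cannot suffice here, since a remarkable cardinal is downward absolute to $L$ and is therefore consistent with $\neg 0^{\sharp}$; the extra strength of $Z_4$ must be exploited to manufacture genuine $L$-indiscernibles. The decisive new resource is that $Z_4$ proves the existence of a regular cardinal $\omega_2$ with $\omega_2<\mathrm{Ord}$ (indeed $\omega_2\leq\beth_2$), so that $L_{\omega_2}[x]$ is a genuine \emph{set} of which we may form elementary submodels and reason about internally. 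This is exactly what $Z_3$ fails to provide: under $Z_3$ every set has cardinality $\leq\beth_1$, and $\omega_2\leq\beth_1^{+}$ can be the very height of the universe, so $L_{\omega_2}[x]$ is not available as a set. This is the structural reason $Z_4$ is minimal.

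Next I would build, by recursion of length $\omega_1$, a continuous $\in$-increasing chain $(X_\xi\mid \xi<\omega_1)$ of countable elementary submodels of $(L_{\omega_2}[x],\in,x)$, arranged so that $\delta_\xi = X_\xi\cap\omega_1$ is strictly increasing and continuous. Since $x\in X_\xi$ and $L_{\omega_2}[x]$ is admissible, the transitive collapse of $X_\xi$ has the form $L_{\bar\eta_\xi}[x]$ with $\bar\eta_\xi$ an $x$-admissible ordinal; here {\sf HP} enters, forcing $\bar\eta_\xi$ to be an $L$-cardinal, and by condensation the $L$-reduct collapses onto a genuine initial segment $L_{\bar\eta_\xi}$ with $\bar\eta_\xi$ an $L$-cardinal. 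Thus each $X_\xi\cap L_{\omega_2}$ is a correct hull of $L_{\omega_2}$. The key combinatorial step is then to thin the chain: the complete type realized by an increasing tuple of $\delta_\xi$'s in $(L_{\omega_2},\in)$ is a subset of $\omega$, hence a real, so a pressing-down/stabilization argument over $\omega_1$ isolates an uncountable $I\subseteq\omega_1$ whose members are true order-indiscernibles for $(L_{\omega_2},\in)$. The whole recursion, its union of size $\aleph_1$, and the stabilization (a function from $\omega_1$ into reals, i.e.\ a subset of $\omega_1\times\omega$, a member of $\mathcal{P}(\mathcal{P}(\omega))$) all live in $H_{\beth_2^+}$, so the construction is legitimate in $Z_4$.

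Finally I would extract from $I$ its Ehrenfeucht--Mostowski blueprint $\Sigma$, the common Skolemized theory of increasing tuples from $I$ in $L_{\omega_2}$, which is a real; reading off the measure generated by $\omega$ indiscernibles yields a countable premouse $(L_\alpha,\in,U)$, and I would identify $0^{\sharp}$ with this object. The crux, and the step I expect to be the main obstacle, is to verify that $\Sigma$ is \emph{remarkable}, i.e.\ that the model generated by indiscernibles of \emph{every} order type is wellfounded, equivalently that the derived premouse is iterable in the sense of the excerpt (all its linear iterates are wellfounded). This is precisely where the uncountable order type of $I$ is used: any illfoundedness in a stretch would be witnessed by a descending $\omega$-sequence definable from finitely many indiscernibles, hence already present inside the wellfounded Skolem hull generated by $I$ itself, a contradiction. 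Since $\Sigma$, the premouse, and the iterability assertion concern only reals and countable iterates, while the \emph{witnesses} to their correctness (the set $I$, the chain, the wellfoundedness comparisons) require a second uncountable cardinal strictly below $\mathrm{Ord}$, the entire argument goes through in $Z_4=H_{\beth_2^+}$ but, by the analysis above, not in $Z_3$.
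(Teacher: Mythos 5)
The decisive gap is the thinning step: ``a pressing-down/stabilization argument over $\omega_1$ isolates an uncountable $I\subseteq\omega_1$ whose members are true order-indiscernibles for $(L_{\omega_2},\in)$.'' No such argument exists. The complete type of an increasing $n$-tuple is a real, so you are colouring $[\omega_1]^n$ with up to $2^{\aleph_0}$ colours; Fodor's lemma applies only to regressive functions of one variable, and by Sierpi\'nski's colouring $\omega_1\not\rightarrow(\omega_1)^2_2$ already for two colours, so no pigeonhole or pressing-down argument can produce an uncountable homogeneous (let alone indiscernible) set. Nothing about the special provenance of the $\delta_\xi$'s rescues this: an uncountable set of order-indiscernibles for $(L_{\omega_2},\in)$ is essentially \emph{equivalent} to the conclusion $0^{\sharp}$ (via the very EM-blueprint machinery you then invoke), so the proposal is circular exactly where the work must happen. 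There is also a structural reason no repair along your lines can succeed: your construction applies {\sf HP} only to \emph{countable} $x$-admissibles, since the collapse heights $\bar\eta_\xi$ of countable hulls $X_\xi\prec L_{\omega_2}[x]$ are all below $\omega_1$. But that fragment of {\sf HP} is strictly weaker than $0^{\sharp}$: running the coding argument of Theorem \ref{main result about SOA} below an inaccessible of $L$ yields a set-generic extension of $L$ --- a model of full ${\sf ZFC}$, hence of $Z_4$, in which $0^{\sharp}$ fails --- containing a real $x$ all of whose countable $x$-admissibles are $L$-cardinals. So any proof using only what your construction uses would prove a false implication.

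The paper's proof spends the $Z_4$ resources (which you correctly identify: $\omega_2$ exists as a set, and one can form $\omega_1$-sized, countably closed hulls) quite differently: it applies {\sf HP} to an \emph{uncountable} admissible. Take $\beta>\omega_2$ $x$-admissible with ${}^\omega L_\beta[x]\subseteq L_\beta[x]$, and $X\prec L_\beta[x]$ with $\omega_2\in X$, $|X|=\omega_1$, $X^{\omega}\subseteq X$; collapsing gives $j\colon L_\theta[x]\rightarrow L_\beta[x]$ with $\omega_1\leq\theta<\omega_2$ $x$-admissible. Here {\sf HP} makes $\theta$ an $L$-cardinal, so $(\kappa^{+})^{L}\leq\theta$ where $\kappa={\rm crit}(j)$, and hence $U=\{A\subseteq\kappa\mid A\in L\wedge\kappa\in j(A)\}$ measures \emph{all} constructible subsets of $\kappa$; the closure $X^\omega\subseteq X$ makes $U$ countably complete, whence $(L_{(\kappa^{+})^{L}},\in,U)$ is an iterable premouse --- which is precisely the paper's official definition of ``$0^{\sharp}$ exists.'' No extraction of indiscernibles is needed at all (if one wants them, they come afterwards by iterating the measure, not by a partition argument). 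Your proposal would have to be rebuilt around this uncountable hull and derived-ultrafilter step to be salvageable.
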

\begin{proof}
Let $x \in 2^\omega$ witness {\sf HP}. Now we work in $L[x]$. Take $\beta>\omega_2$ big enough such that $\beta$ is $x$--admissible
and ${}^\omega L_\beta[x] \subseteq L_\beta[x]$. 
Take $X\prec L_{\beta}[x]$ such that $\omega_2\in X$, $|X|=\omega_1$, and $X^{\omega}\subseteq X$. Let $j: L_{\theta}[x]\cong X\prec L_{\beta}[x]$ be the collapsing map. Note that $\omega_1\leq\theta<\omega_2$, $\theta$ is 
$x$--admissible, and $L_{\theta}[x]$ is closed under $\omega$-sequences. Let $\kappa=crit(j)$. Define $U=\{A\subseteq \kappa\mid A\in L\wedge\kappa\in j(A)\}$.  Since $\theta$ is an $L$-cardinal by the choice of $x$ as witnessing
{\sf HP}, $(\kappa^{+})^{L}\leq\theta<\omega_2$. Therefore, $U$ is an $L$-ultrafilter on $\kappa$.

Let $\alpha=(\kappa^{+})^{L}$. Consider the structure $(L_{\alpha}, \in, U)$ which is a premouse. Since $L_{\theta}[x]$ is closed under $\omega$-sequences from $L_{\theta}[x], U$ is countably complete.\footnote{I.e. if $\{X_n| n\in\omega\}\subseteq U$, then $\bigcap_{n\in\omega} X_n\neq\emptyset$.} So $(L_{\alpha}, \in, U)$ is iterable. Hence $0^{\sharp}$ exists.
\end{proof}

So in $Z_4$, {\sf HP} is equivalent to $0^{\sharp}$ exists. In fact in $Z_2$, $0^{\sharp}$ exists implies {\sf HP}. By Corollary \ref{the main corollary for paper} and Theorem \ref{cited theorem from Woodin}, we have $Z_4$ is the minimal system in higher order arithmetic to show that {\sf HP} and  $0^{\sharp}$ exists
are equivalent with each other.

\section{Strengthenings of Harrington's Principle over higher order arithmetic}

Recall the hypothesis on $\varphi(-)$ as stated in Definition \ref{defn_hpphi}:
$\varphi(-)$ is a $\Sigma_2$--formula in the language of set theory such that,
provably in {\sf ZFC}: for all $\alpha$, if 
$\varphi(\alpha)$, then $\alpha$ is an inaccessible cardinal and $L \models \varphi(\alpha)$. 
Let us give some examples of such $\varphi(-)$: 
$\kappa$ is inaccessible, Mahlo, weakly compact, $\Pi^n_m$-indescribable, totally indescribable, $n$-subtle, $n$-ineffable, totally ineffable cardinal, $\alpha$-iterable $(\alpha<\omega_1^{L})$, and  $\alpha$-Erd\"os cardinal ($\alpha<\omega_1^L$). However, $\kappa$ being reflecting, unfoldable, or remarkable cannot be expressed in a $\Sigma_2$ fashion.

\begin{definition}
Let $\varphi(-)$ be as in Definition \ref{defn_hpphi}. 
Let $\delta$ be an inaccessible cardinal or $\delta = Ord$. We say that $\delta$ is 
$\varphi$--{\em Mahlo} iff $\{\alpha < \delta | \varphi(\alpha)\}$ is stationary
in $\delta$. We say that $\delta$ is $2$--$\varphi$--{\em Mahlo} iff
$\{\alpha<\delta| \varphi(\alpha)\wedge \{\beta<\alpha| \varphi(\beta)\}$ is stationary in $\alpha \}$ is stationary in $\delta$.
\end{definition}

Notice that we do not require a $\varphi$--Mahlo or a $2$--$\varphi$--Mahlo
to satisfy $\varphi(-)$.

\subsection{The strength of $Z_2\, +\,$ {\sf HP$(\varphi)$}}

\begin{theorem}\label{main result about SOA_2}
Let $\varphi(-)$ be as in Definition \ref{defn_hpphi}. The following theories 
are equiconsistent.
\begin{enumerate}
\item[(1)]
$Z_2\, +\,$ {\sf HP$(\varphi)$},and
\item[(2)] $ZFC \,+ Ord$ is $\varphi$--Mahlo.
\end{enumerate}
\end{theorem}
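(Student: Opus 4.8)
The plan is to mirror the equiconsistency proof of Theorem \ref{main result about SOA}, replacing the clause ``$\alpha$ is an $L$-cardinal'' throughout by ``$L \models \varphi(\alpha)$'' and tracking where the stationarity of $\{\alpha \mid \varphi(\alpha)\}$ is consumed. For the forward direction, assuming $Z_2 + \text{\sf HP}(\varphi)$, I would first verify $L \models ZFC$ and then show $Ord$ is $\varphi$--Mahlo in $L$. Given any club $C \subseteq Ord$ that is definable in $L$, I would let $x \in 2^\omega$ witness $\text{\sf HP}(\varphi)$ and find, by the usual admissibility argument, an $x$--admissible ordinal $\alpha$ that is also a limit point of $C$ (so $\alpha \in C$); then $L \models \varphi(\alpha)$ by the choice of $x$, showing $\{\alpha \mid L \models \varphi(\alpha)\}$ meets $C$. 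The subtlety here is that $Ord$--stationarity for definable classes must be phrased carefully over $Z_2$; I would argue inside $L$, which is a model of $ZFC$, using that $x$--admissible ordinals form a club and that $L \models \varphi(\alpha)$ whenever $\varphi(\alpha)$ holds (a hypothesis on $\varphi$).

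For the consistency direction, I would assume $L \models ZFC + Ord$ is $\varphi$--Mahlo together with the minimality hypothesis that no $L_\gamma \models ZFC + Ord$ is $\varphi$--Mahlo, i.e.\ no $L_\gamma$ models ``$ZFC$ and $\{\alpha \mid \varphi(\alpha)\}$ is stationary.'' As in Theorem \ref{main result about SOA}, let $G$ be $Col(\omega, <Ord)$-generic over $L$, so $L[G] \models Z_2$, pick $A \subseteq Ord$ with $V = L[A]$ coding well-orderings of successor $L$-cardinals on the blocks $A \cap [\lambda, \lambda+\omega)$, and by minimality conclude that $L_{\alpha+1}[A \cap \alpha] \models \alpha$ is countable for all $\alpha \geq \omega$. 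Then I would run the same almost disjoint forcing to code $A$ by a real $x$, obtaining $L[x] \models Z_2$.

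The core of the argument is the verification of $\text{\sf HP}(\varphi)$ in $L[x]$, which is where the $\varphi$--Mahlo hypothesis does its work. I would show: if $\alpha$ is $x$--admissible, then $L \models \varphi(\alpha)$. Suppose not. The $\varphi$--Mahloness of $Ord$ in $L$ (together with the coding set-up) should guarantee that the set of $x$--admissible ordinals $\alpha$ with $L \models \varphi(\alpha)$ is ``large'' enough that any putative $x$--admissible $\alpha$ with $\neg \varphi(\alpha)$ leads to a contradiction via the coding. Concretely, as in the original proof, if $\alpha$ is $x$--admissible and $\lambda$ is the largest $L$-cardinal below $\alpha$ with $L \models \varphi(\lambda)$ but $\neg(L \models \varphi(\alpha))$, then $A \cap [\lambda,\lambda+\omega)$ is computable inside $L_\alpha[x]$ and codes a well-ordering whose length exceeds $\alpha$, contradicting $x$--admissibility. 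The main obstacle I anticipate is ensuring the coding set $A$ is arranged so that $\varphi$--values, not merely $L$-cardinal values, are recoverable inside $L_\alpha[x]$ from $A \cap \alpha$; since $\varphi$ is $\Sigma_2$ and satisfied inaccessibles are $L$-cardinals, I expect one must code, on each relevant block, a witness to the next ordinal satisfying $\varphi$ in $L$, and then use $\varphi$--Mahloness to argue such witnesses are cofinally available. Verifying that this finer coding still goes through the almost disjoint forcing unchanged, and that $x$--admissibility suffices to decode it, is the delicate point, but it is a direct elaboration of the argument in Theorem \ref{main result about SOA}.
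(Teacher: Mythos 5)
Your forward direction matches the paper's. The backward direction, however, has two genuine gaps. First, you never convert the stationary class $S=\{\alpha \mid \varphi(\alpha)\}$ into a \emph{closed} class, and the decoding argument you sketch needs closure in an essential way: since $\varphi(\alpha)$ implies $\alpha$ is inaccessible, $S$ is nowhere closed, so ``the largest $\lambda<\alpha$ with $L\models\varphi(\lambda)$'' need not exist, and, worse, an $x$--admissible $\alpha$ that is a limit of elements of $S$ but not itself in $S$ cannot be refuted by any block coding placed at elements of $S$ --- in that limit case there is simply no coded ordinal above $\alpha$ to contradict admissibility with. This is exactly the ``delicate point'' you flag, but coding ``witnesses to the next $\varphi$-ordinal'' cofinally does not resolve it. The paper's missing idea is to first \emph{shoot a club} $C\subseteq S$ (noting $S$ stays stationary after $Col(\omega,<Ord)$ by the $Ord$--c.c., and that the class club-shooting forcing is $\omega$--distributive, so $Z_2$ survives), and then code, on the block $[\lambda,\lambda+\omega)$ for $\lambda\in C$, a well-ordering of ${\rm min}(C\setminus(\lambda+1))$. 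Closedness of $C$ makes the largest element of $C$ below $\alpha$ exist and makes limits of coding points themselves coding points, so every $x$--admissible lands in $C\subseteq S$.

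Second, your claim that a minimality hypothesis (``no $L_\gamma\models ZFC\,+\,Ord$ is $\varphi$--Mahlo'') yields the analogue of (\ref{jaja}) and lets you skip reshaping is false. The mechanism behind (\ref{jaja}) in Theorem \ref{main result about SOA} uses (\ref{nozfc}), i.e.\ that \emph{no} level $L_\gamma$ models $ZFC$, so that every ordinal is definably collapsed quickly in the relativized $L$--hierarchy. Your hypothesis (2) is incompatible with that situation: it supplies stationarily many $\alpha$ with $\varphi(\alpha)$, each of which is inaccessible in $L$, hence stationarily many $\gamma$ with $L_\gamma\models ZFC$. Minimality with respect to the stronger theory cannot remove these, and for such $\alpha$ no naive choice of a single class $A$ makes $L_{\alpha+1}[A\cap\alpha]\models\alpha$ countable for all $\alpha$ simultaneously --- this is precisely the classical reshaping problem. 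The paper accordingly assumes no minimality at all in this proof (see its footnote contrasting with Theorem \ref{main result about SOA}) and instead forces with the reshaping partial order ${\mathbb R}$ of conditions $p\colon\alpha\rightarrow 2$ with $L_{\xi+1}[A\cap\xi,p\upharpoonright\xi]\models\xi$ countable, proving ${\mathbb R}$ is $\omega$--distributive via a club of $\Sigma_{m+5}$--elementary levels $L_\beta[G][H]\prec_{\Sigma_{m+5}}L[G][H]$ whose $\omega^{\rm th}$ element has definable cofinality $\omega$, followed by a simplified fusion argument in the style of Claim \ref{forcing preserve oemga}. Only after reshaping can one pick $B$ with the property (\ref{res}) and run the almost disjoint coding and admissibility argument as you describe.
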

\begin{proof}
Let us first suppose (1), and let $x \in 2^\omega$ be as in ${\sf HP}(\varphi)$.
There is a club class of $x$--admissibles, so that $\{\alpha | 
L\models\varphi(\alpha)\}$  contains a club. Hence $L\models ``{\sf ZFC}
\,+\, \{\alpha\in Ord| \varphi(\alpha)\}$ is stationary." This shows (2) in $L$. \par
Let us now suppose (2). We force over $L$. Let $S= \{\alpha\in Ord\mid \varphi(\alpha)\}$.   Let $G$ be $Col(\omega, <Ord)$-generic over $L$. Then $L[G]\models Z_2$, and in $L[G]$, $S$ is still stationary, because $Col(\omega, <Ord)$
has the $Ord$--c.c. We can thus shoot a club through $S$ via ${\mathbb P}=\{p| p$ is a closed set of ordinals and $p\subseteq S\}$. 
Let $H$ be ${\mathbb P}$--generic over $L[G]$.
Standard arguments give that ${\mathbb P}$ is 
$\omega$-distributive, which implies that $L[G][H]\models Z_2$. Let $C\subseteq S$ be the club added by $H$. We may pick $A\subseteq Ord$ such that $L[G][H]=L[A]$. \par
%and if $\lambda\in C\setminus(\omega+1)$, then $A\cap[\lambda, \lambda+\omega)$ %codes a well ordering of ${\rm min}(C\setminus(\lambda+1))$. \par
We need to reshape $A$ as follows.\footnote{In the proof of Theorem \ref{main result about SOA} 
there was no need for reshaping due to (\ref{jaja}).} 
Let $p \in {\mathbb R}$ iff $p \colon
\alpha \rightarrow 2$ for some ordinal $\alpha$ such that 
for all $\xi \leq \alpha$, $$L_{\xi+1}[A \cap \xi, p \upharpoonright \xi] \models
\xi \mbox{ is countable.}$$
We claim that ${\mathbb R}$ is $\omega$--distributive. To see this, let $(D_n | 
n<\omega)$ be a, say, $\Sigma_m$--definable sequence of open dense classes, and let $p \in {\mathbb R}$.
Let $E$ be the class of all $\beta$ such that 
$L_\beta[G][H] \prec_{\Sigma_{m+5}} L[G][H]$ 
%such that
%$X \cap Ord$ has cofinality $\omega$ in $L_{\alpha+1}[G][H]$ 
and $p$ as well
as the parameters defining $(D_n | 
n<\omega)$ are all in $L_\beta[G][H]$. $E$ is club, and we may let $\alpha$
be the $\omega^{\rm th}$ element of $E$. Then $E \cap \alpha$ is $\Sigma_{m+6}$--definable 
over $L_\alpha[G][H]$ and cofinal in $\alpha$, so that $\alpha$ has cofinality $\omega$ in $L_{\alpha+1}[G][H]$. 
A much simplified variant of the argument from Claim \ref{forcing preserve oemga},
which we will leave as an exercise to the reader,
then produces 
some $q \in {\mathbb R}$ with $q \leq p$, $q \colon \alpha \rightarrow 2$, and
$q \in \bigcap_{n <\omega} \, D_n$. \par 
Let $K$ be ${\mathbb R}$--generic over
$L[G][H]$. In $L[G][H][K]$, we may then pick some
$B\subseteq Ord$ such that $L[G][H][K]=L[B]$,
if $\lambda\in C\setminus(\omega+1)$, then $B\cap[\lambda, \lambda+\omega)$,
restricted to the odd ordinals, 
codes a well ordering of ${\rm min}(C\setminus(\lambda+1))$, and
for all $\alpha\geq\omega$, 
\begin{eqnarray}\label{res}
L_{\alpha+1}[B\cap\alpha]\models \alpha \mbox{ is countable. }
\end{eqnarray}
We may now continue as in the proof of Theorem \ref{main result about SOA}. \par 
We do standard almost disjoint forcing to add a real $x$ such that
if $(c_{\alpha}| \alpha\in Ord)$ is the canonical sequence of pairwise almost disjoint
subsets of $\omega$ given by (\ref{res}), then for any $\alpha\in Ord$, $\alpha\in B\Leftrightarrow |x\cap c_{\alpha}|<\omega$. In particular, $L[B][x]=L[x]$.
This forcing is $c.c.c.$, so that also $L[x]\models Z_2$. \par
We claim that in $L[x]$, {\sf HP$(\varphi)$} holds true.
It suffices to show that if $\alpha$ is $x$-admissible, then $\alpha\in C$. Suppose $\alpha$ is $x$-admissible but $\alpha\notin C$. Let $\lambda$ be the largest element of $C$ such that $\lambda<\alpha$. Note that we can define $B\cap\alpha$ over $L_{\alpha}[x]$. Since $B\cap[\lambda, \lambda+\omega)\in L_{\alpha}[x]$ and $B\cap[\lambda, \lambda+\omega)$, restricted to the odd ordinals,
codes a well ordering of ${\rm min}(C\setminus(\lambda+1))$, we have ${\rm min}(C\setminus(\lambda+1))\in L_{\alpha}[x]$, because $\alpha$ is $x$--admissible. 
But ${\rm min}(C\setminus(\lambda+1))>\alpha$. Contradiction! So $L[x]\models Z_2\, + \,$ {\sf HP$(\varphi)$}.%\footnote{We can build a set model of $Z_2+$ {\sf HP} assuming there exists a weakly %inaccessible cardinal via set forcing without reshaping as the first author originally %proved. Via class forcing we can get this better equiconsistency result.}
\end{proof}

\subsection{The strength of $Z_3\, +\,$ {\sf HP$(\varphi)$}}

\begin{definition} (\cite{Jensen})
\begin{enumerate}[(1)]
  \item Let $N$ be transitive. $N$ is {\em full} if and only if $\omega \in  N$ and there is $\gamma$ such that $L_{\gamma}(N)\models ZFC^{-}$ and $N$ is regular in $L_{\gamma}(N)$, i.e., if $f : x \rightarrow N, x \in N$, and $f \in L_{\gamma}(N)$, then $ran(f)\in N$.
  \item Let ${\mathbb B}$ be a complete Boolean algebra. Let $\delta({\mathbb B})$ be the smallest cardinality of a set which lies dense in ${\mathbb B}\setminus \{0\}$.
  \item Let $N=L_{\gamma}^{A}=( L_{\gamma}[A], \in, A\cap L_{\gamma}[A])$ be a model of $ZFC^{-}$. Let $X\cup \{\delta\}\subseteq N$. Define $C_{\delta}^{N}(X)=$ the smallest $Y\prec N$ such that $X\cup \{\delta\}\subseteq Y$.
\end{enumerate}
\end{definition}

\begin{definition}
(\cite[p.\ 31]{Jensen}) Let ${\mathbb B}$ be a complete 
Boolean algebra. ${\mathbb B}$ is a {\em subcomplete} 
forcing if and only if for sufficiently large cardinals $\theta$ we have: ${\mathbb B}\in H_{\theta}$ and for any $ZFC^{-}$ model $N=L_{\tau}^{A}$ such that $\theta<\tau$ and $H_{\theta}\subseteq N$ we have: Let $\sigma: \bar{N}\rightarrow N$ where $\bar{N}$ is countable and full. Let $\sigma(\bar{\theta}, \bar{s}, \bar{{\mathbb B}})=\theta, s, {\mathbb B}$ where $\bar{s}\in \bar{N}$. Let $\bar{G}$ be $\bar{{\mathbb B}}$-generic over $\bar{N}$. Then there is $b\in {\mathbb B}\setminus \{0\}$ such that whenever $G$ is ${\mathbb B}$-generic over $V$ with $b\in G$, there is $\sigma^{\prime}\in V[G]$ such that
\begin{enumerate}[(a)]
  \item $\sigma^{\prime}: \bar{N}\rightarrow N$,
  \item $\sigma^{\prime}(\bar{\theta}, \bar{s}, \bar{{\mathbb B}})=\theta, s, {\mathbb B}$,
  \item $C_{\delta}^{N}(ran(\sigma^{\prime}))=C_{\delta}^{N}(ran(\sigma))$ where $\delta=\delta({\mathbb B})$,
  \item $\sigma^{\prime} \mbox{''} \bar{G}\subseteq G$.
\end{enumerate}
\end{definition}

By \cite{Jensen}, cf.\ also \cite{JensenI}, subcomplete forcings add no reals and are closed under Revised Countable Support (RCS) iterations subject to the usual constraints (see \cite[Theorem 3,
p.\ 56]{Jensen}). In the following, we give some examples of forcing notions which are subcomplete that will be used in this paper.

The set $\omega_2^{<\omega}$ of monotone finite sequences in
$\omega_2$ is a tree ordered by inclusion. Namba forcing is the collection
of all subtrees $T\neq\emptyset$ of $\omega_2^{<\omega}$ with a unique stem, stem($T$), such that every element of $T$ is compatible with stem($T$), and every element extending stem($T$) has $\omega_2$ immediate successors in $T$. The order is defined by: $T\leq \bar{T}$ if and only if $T\subseteq \bar{T}$. If $G$ is generic for Namba forcing, then $S=\bigcup\bigcap G$ is a cofinal map of $\omega$ into $\omega_2^{V}$. We call any such $S$ a Namba sequence. Namba forcing is
stationary set preserving and adds no reals if $CH$ holds.

\begin{fact}\label{namba forcing}
(\cite{Jensen}, Lemma 6.2) Assume CH. Then Namba forcing is subcomplete.
\end{fact}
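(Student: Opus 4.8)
The plan is to verify Jensen's criterion directly. Fix a large $\theta$ with $\mathbb{B}\in H_\theta$, where $\mathbb{B}$ is the completion of Namba forcing, a $ZFC^-$-model $N=L_\tau^A$ with $\theta<\tau$ and $H_\theta\subseteq N$, an elementary $\sigma\colon\bar N\to N$ with $\bar N$ countable and full and $\sigma(\bar\theta,\bar s,\bar{\mathbb{B}})=(\theta,s,\mathbb{B})$, and a $\bar{\mathbb{B}}$-generic $\bar G$ over $\bar N$. Let $\bar S=\bigcup\bigcap\bar G\colon\omega\to\omega_2^{\bar N}$ be the associated Namba sequence, cofinal in the countable ordinal $\omega_2^{\bar N}$, and put $\delta=\delta(\mathbb{B})$. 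The first point to record is that $\delta\geq\omega_2$, so that the $\delta$-hull $C=C_\delta^N(ran(\sigma))$ contains all ordinals below $\omega_2$; hence requirement (c), which asks the interpolant $\sigma'$ to generate this same hull, does not by itself bound $ran(\sigma')\cap\omega_2$ below $\omega_2$. I also record the two features of Namba forcing under CH that drive the argument: it adds no reals, and any generic $G$ yields a cofinal $S=\bigcup\bigcap G\colon\omega\to\omega_2^V$, so $cf(\omega_2^V)=\omega$ in $V[G]$.

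The heart of the matter is to produce, in $V[G]$ below a suitable condition $b$, an elementary $\sigma'\colon\bar N\to N$ with $\sigma'\circ\bar S=S$, with $C_\delta^N(ran(\sigma'))=C$, and with $\sigma'(\bar\theta,\bar s,\bar{\mathbb{B}})=(\theta,s,\mathbb{B})$. The equation $\sigma'\circ\bar S=S$ is exactly what forces $\sigma'{}''\bar G\subseteq G$: for $\bar T\in\bar G$ the branch $\bar S$ runs through $\bar T$, so by elementarity $S=\sigma'\circ\bar S$ runs through $\sigma'(\bar T)$, whence $\sigma'(\bar T)\in G$. This is consistent with $ran(\sigma')\subseteq C$ precisely because $C\supseteq\omega_2\ni S(i)$ while $cf(\omega_2^V)=\omega$ in $V[G]$, whereas the given $\sigma\in V$ has $\sigma''\omega_2^{\bar N}$ bounded in $\omega_2^V$. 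To organize the construction I would collapse $C$ transitively by $\pi\colon C\cong\bar C$; since $C\supseteq\delta$, the map $\pi$ fixes every ordinal $\leq\delta$, so $\omega_2^{\bar C}=\omega_2^V$, the Namba trees of $\bar C$ are genuine Namba trees on $\omega_2^V$, and $\pi$ fixes each $S(i)$. Writing $\hat\sigma=\pi\circ\sigma\colon\bar N\to\bar C$, the task reduces to finding in $V[G]$ an elementary $\hat\sigma'\colon\bar N\to\bar C$ with $\hat\sigma'\circ\bar S=S$, agreeing with $\hat\sigma$ on the named parameters, and with $ran(\hat\sigma')$ still $\delta$-generating $\bar C$; then $\sigma'=\pi^{-1}\circ\hat\sigma'$ is as required.

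To build $\hat\sigma'$ I would match $\bar S$ to the external sequence $S$ and extend by genericity. Since $\bar N$ is countable, there are only countably many dense sets $\bar D_n\in\bar N$ of $\bar{\mathbb{B}}$, and their transports $E_n=\hat\sigma(\bar D_n)$ are dense subsets of the Namba forcing $\mathbb{B}^{\bar C}$ lying in $\bar C$. Via a standard condensation argument, the requirement that $\bar S(i)\mapsto S(i)$ extend to an elementary $\hat\sigma'$ amounts to $S$ meeting every $E_n$, i.e. to $S$ realizing over $ran(\hat\sigma)$ the $\hat\sigma$-transported type of $\bar S$ over $\bar N$. The condition $b$ is then produced by a fusion diagonalizing against the countably many $E_n$; this is the same fusion that proves Namba forcing adds no reals under CH, and it is exactly here that CH is used, both to run the fusion and to keep the construction inside $V[G]$. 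The fullness of $\bar N$ is what guarantees that the matched, generic partial map lifts to a \emph{total} elementary $\hat\sigma'$ with regular range, so that $ran(\hat\sigma')$ indeed $\delta$-generates $\bar C$. Pulling back through $\pi^{-1}$ then yields $\sigma'$ satisfying (a)--(d), and since the existence of such a $\sigma'$ is forced, the witnessing $b$ is any condition forcing the construction to succeed.

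The main obstacle is the simultaneous achievement of (c) and (d). Condition (d) forces $\sigma'$ to send the bounded sequence $\bar S$ onto the cofinal generic $S$, a drastic departure from the given $\sigma$, while (c) rigidly pins the $\delta$-hull generated by $ran(\sigma')$ to be exactly $C$; the reconciliation rests on $\delta\geq\omega_2$, so that $S$ lives inside the hull, together with the collapse picture, which turns the problem into realizing, by the single externally given $V$-generic $S$, a $\mathbb{B}^{\bar C}$-generic over $\bar C$ that meets the countably many transported dense sets $E_n$. Securing a condition $b$ that forces this, and invoking the fullness of $\bar N$ to upgrade the resulting matching into a total elementary embedding, is the technical core; both steps depend essentially on CH through the no-new-reals fusion for Namba forcing.
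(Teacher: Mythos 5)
Note first that the paper itself does not prove this Fact: it is quoted verbatim from Jensen's lecture notes (Lemma 6.2), so your attempt has to be measured against Jensen's actual argument. Your skeleton does track that argument in outline: condition (d) reduces to $\sigma'\circ\bar S=S$ because the Namba generic filter is determined by its branch; the tension between (c) and (d) is resolved by $\delta(\mathbb{B})\geq\omega_2$ together with reading $C^N_\delta(X)$ as the hull of $X\cup\delta$ (here you have in effect corrected a typo in the paper, whose rendition ``$X\cup\{\delta\}$'' would make $C$ a countable set of the ground model, so that $ran(\sigma')\subseteq C$ would outright contradict $\sigma'\circ\bar S=S$ for a cofinal generic $S$); and a CH-based fusion against the countably many dense sets of the countable model $\bar N$ is indeed the combinatorial engine.

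The genuine gap is at the step you dispose of as ``a standard condensation argument.'' Extending the pointwise matching $\bar S(i)\mapsto S(i)$ (together with the designated parameters) to a \emph{total} elementary $\hat\sigma'\colon\bar N\to\bar C$ whose range $\delta$-generates exactly $\bar C$ is not equivalent to $S$ meeting the transported dense sets $E_n=\hat\sigma(\bar D_n)$: genericity of the induced filter over $ran(\hat\sigma)$ is neither necessary nor sufficient for this. What is actually required is that the $\omega$-sequence $S$ realize in $\bar C$ the full elementary type of $\bar S$ over $\bar N$ (every first-order constraint on every finite block $\bar S\upharpoonright n$, with the parameters fixed), and then that this partial elementary map extend to all of $\bar N$ with the hull equality (c) -- and neither demand is met by the fusion that shows Namba forcing adds no reals, which only arranges dense-set meeting, not type realization; likewise ``fullness upgrades the matching to a total embedding'' is asserted without any mechanism. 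In Jensen's proof this is precisely where the heavy machinery enters: the existence of $\sigma'$ is rendered as the consistency of an infinitary theory (with constants for the generators of $C$, which is how (c) is secured -- it is not automatic), the condition $b$ is built by a fusion interleaved with syntactic bookkeeping so that every generic branch through $b$ keeps that theory consistent, and $\sigma'$ is then extracted in $V[G]$ via Barwise completeness over an admissible cover, with the fullness of $\bar N$ used exactly to verify the consistency of the theory. So your proposal correctly identifies the obstructions and the shape of the solution, but the technical core -- controlling the first-order type of the generic branch and converting it into a total embedding -- is a missing idea, not a routine verification.
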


\begin{definition}
Suppose $\kappa$ is a cardinal or $\kappa=Ord$. Define $Club(\kappa, S)=\{p | 
p \colon \alpha + 1\rightarrow S$ for some $\alpha<\kappa$ and $p$ is increasing and continuous\}. The extension relation is defined by: $p\leq q$ if and only if $p\supseteq q$. 
\end{definition}

The forcing $Club(\omega_1,S)$ has been used in the proof of Thorem
\ref{main result about SOA}. If $G$ is $Club(\omega_1, S)$-generic, then $\bigcup G : \omega_1 \rightarrow S$ is increasing, continuous and cofinal in $S$. 

%\begin{definition}
%Define $S^2_0=\{\alpha<\omega_2\mid cf(\alpha)=\omega\}$ and %$S^2_1=\{\alpha<\omega_2\mid cf(\alpha)=\omega_1\}$.
%\end{definition}
%
%\begin{fact}\label{first lemma}
%(\cite{Cummings}, Lemma 18.5)\quad Suppose $CH$ holds and $S\subseteq S^2_1$ %is stationary. Then $Club(\omega_2, S^2_0\cup S)$ adds no $\omega_1$-sequences of %ordinals.
%\end{fact}

\begin{fact}\label{club forcing}
(\cite[Lemma 6.3]{Jensen}) Let $\kappa> \omega_1$ be a regular cardinal. Let $S\subseteq\kappa$  be a stationary set.  Then $Club(\omega_1, S)$ is subcomplete.
\end{fact}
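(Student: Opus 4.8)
The plan is to verify the definition of subcompleteness directly, following Jensen's method; since $\bar N$ is countable and all of the relevant objects ($N$, $\sigma$, $\bar N$, $\bar G$) may be taken to lie in $V$, the entire argument reduces to a single ground-model construction. So fix a sufficiently large $\theta$, a $ZFC^-$-model $N=L_\tau^A$ with $H_\theta\subseteq N$ and $\mathbb{B}=Club(\omega_1,S)\in H_\theta$, an embedding $\sigma\colon\bar N\to N$ with $\bar N$ countable and full, and $\sigma(\bar\theta,\bar s,\bar{\mathbb{B}})=\theta,s,\mathbb{B}$; write $\bar\kappa=\sigma^{-1}(\kappa)$, $\bar S=\sigma^{-1}(S)$, $\bar\omega_1=\omega_1^{\bar N}$. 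Let $\bar G$ be $\bar{\mathbb{B}}$-generic over $\bar N$ and let $\bar c=\bigcup\bar G\colon\bar\omega_1\to\bar S$ be the associated increasing continuous map, cofinal in $\bar\kappa$ in the sense of $\bar N$. Put $\delta^{*}=\sup(\sigma''\bar\omega_1)$, a countable ordinal.

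First I isolate what must be produced. Suppose we have found, in $V$, an elementary $\sigma'\colon\bar N\to N$ satisfying clauses (a)--(c) of the definition together with the extra demand $\sup(\sigma'{}''\bar\kappa)\in S$. Then $t'=\bigcup_{\bar p\in\bar G}\sigma'(\bar p)$ is an increasing continuous function $\delta^{*}\to S$ whose range is $\sigma'{}''(\mathrm{ran}\,\bar c)$, cofinal in $\gamma:=\sup(\sigma'{}''\bar\kappa)$. Since $\gamma\in S$, the map $q=t'\cup\{(\delta^{*},\gamma)\}$ is a condition in $Club(\omega_1,S)$, and $q\le\sigma'(\bar p)$ for every $\bar p\in\bar G$. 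Taking $b=q$, any $\mathbb{B}$-generic $G$ over $V$ with $q\in G$ then has $\sigma'(\bar p)\in G$ for all $\bar p\in\bar G$, which is clause (d), while $\sigma'\in V\subseteq V[G]$ witnesses (a)--(c). Thus it suffices to build $\sigma'$.

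To build $\sigma'$, let $X=C^N_\delta(\mathrm{ran}\,\sigma)$ (with $\delta=\delta(\mathbb{B})$), a countable elementary submodel of $N$ containing $\delta,\kappa,S,\omega_1,s,\mathbb{B}$, and let $\pi\colon\bar X\cong X\prec N$ be the transitive collapse, so that $\bar X$ is a countable transitive $ZFC^-$-model with $\bar X\models$``$\pi^{-1}(S)$ is stationary in $\pi^{-1}(\kappa)$''. It is enough to find an elementary $\tau\colon\bar N\to\bar X$ with $C^{\bar X}_{\pi^{-1}(\delta)}(\mathrm{ran}\,\tau)=\bar X$, sending $\bar\theta,\bar s,\bar{\mathbb{B}}$ to $\pi^{-1}(\theta),\pi^{-1}(s),\pi^{-1}(\mathbb{B})$, and with $\sup(\tau''\bar\kappa)$ landing in $\pi^{-1}(S)$ at a point of $\pi$-continuity; then $\sigma'=\pi\circ\tau$ has the same $\delta$-hull as $\sigma$ (giving (c)), respects the parameters (giving (b)), and satisfies $\sup(\sigma'{}''\bar\kappa)\in S$. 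Such a $\tau$ is produced by a back-and-forth construction: one builds an increasing $\omega$-chain of finite partial elementary maps from $\bar N$ into $\bar X$, using the countability of both structures, interleaving a bookkeeping that guarantees $\mathrm{ran}\,\tau$ generates all of $\bar X$ over $\pi^{-1}(\delta)$ with a steering that drives $\sup(\tau''\bar\kappa)$ toward a target in $\pi^{-1}(S)$. Fullness of $\bar N$ is what guarantees that the union of the chain is a totally defined, elementary embedding of all of $\bar N$.

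The main obstacle is precisely this last construction: simultaneously forcing $\mathrm{ran}\,\tau$ to generate the whole collapsed hull $\bar X$ and driving the supremum $\sup(\tau''\bar\kappa)$ into the collapsed stationary set $\pi^{-1}(S)$. The steering is possible because $\pi^{-1}(S)$ is stationary in $\pi^{-1}(\kappa)$ in $\bar X$: the ordinals realizable as $\sup(\tau''\bar\kappa)$ for suitable partial approximations form a club, which therefore meets $\pi^{-1}(S)$, and the hypothesis that $\kappa>\omega_1$ is regular is what makes $Club(\omega_1,S)$ behave as intended and gives the approximations the right ($\omega$-cofinal) shape. Controlling the interaction of the two bookkeeping tasks, and ensuring that elementarity survives the limit via fullness of $\bar N$, is the delicate heart of the argument; everything else is routine unravelling of the definitions.
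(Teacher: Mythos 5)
A preliminary remark: the paper contains no proof of this Fact at all --- it is quoted verbatim from Jensen's lecture notes (Lemma 6.3 there), so your attempt can only be compared with Jensen's argument. Your second paragraph, the reduction, is essentially correct and is indeed how the proof is organized: since any elementary embedding between transitive $ZFC^-$ models fixes every ordinal that is countable in the domain model, $\sigma'\upharpoonright\omega_1^{\bar N}={\rm id}$, so your $\delta^*$ is just $\omega_1^{\bar N}$, the map $t'=\bigcup_{\bar p\in\bar G}\sigma'(\bar p)$ is increasing and continuous with domain $\omega_1^{\bar N}<\omega_1$, and if $\gamma=\sup(\sigma'{}''\bar\kappa)\in S$ then $q=t'\cup\{(\omega_1^{\bar N},\gamma)\}$ is a legitimate master condition witnessing clause (d) with $\sigma'\in V$. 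So the whole problem correctly reduces to producing $\sigma'\in V$ with the right parameter images, the right hull, and $\sup(\sigma'{}''\bar\kappa)\in S$.

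The construction of that $\sigma'$ in your last two paragraphs has two genuine gaps, and the first is fatal as written. (i) You steer $\sup(\tau''\bar\kappa)$ into $\pi^{-1}(S)$ by intersecting a ``club of realizable suprema'' with $\pi^{-1}(S)$, on the grounds that the latter is stationary. But $\pi^{-1}(S)$ is stationary only \emph{in the sense of the countable structure} $\bar X$, whereas your club of realizable suprema is a set in $V$, external to $\bar X$; a set that is merely $\bar X$-stationary is just some subset of a countable ordinal and need not meet any external club. Collapsing first and steering inside the collapse destroys exactly the stationarity you need. The correct argument uses the \emph{genuine} stationarity of $S$ in $V$ at the regular $\kappa>\omega_1$: before collapsing anything, one forms the $V$-club $C$ of $\lambda<\kappa$ with $C^N_\delta(\lambda\cup {\rm ran}(\sigma))\cap\kappa=\lambda$, picks a target $\lambda\in S\cap C$ of cofinality $\omega$, and then builds $\sigma'$ with $\sup(\sigma'{}''\bar\kappa)=\lambda$. (Note also that every realizable supremum has $V$-cofinality $\omega$, a constraint your sketch never confronts; $S$ must supply cofinality-$\omega$ targets, which it does in this paper's application, where the relevant $S$ consists of cofinality-$\omega$ points.) (ii) The existence of the constrained embedding is not a back-and-forth. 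Finite partial elementary maps between two countable structures do not in general have the extension property (that would require a homogeneity or saturation assumption you do not have), and nothing guarantees that \emph{any} elementary $\tau\colon\bar N\to\bar X$ exists: the one embedding you know to exist, namely $\sigma$, goes into $N$, not into $\bar X$, and has the wrong supremum. This existence step is precisely where Jensen's machinery is unavoidable: one writes down, over a suitable admissible set, the infinitary theory asserting that $\sigma'\colon\bar N\to N$ is elementary with $\sigma'(\bar\theta,\bar s,\bar{\mathbb B})=\theta,s,{\mathbb B}$, $\sup(\sigma'{}''\bar\kappa)=\lambda$, and the hull condition, and proves it consistent by Barwise compactness; the fullness of $\bar N$ is used to meet the hypotheses of that machinery, not --- as you assert --- to make ``the union of the chain'' elementary. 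You correctly identify the steering-plus-existence step as the delicate heart of the argument, but as proposed neither half of it goes through.
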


%$Club(\omega_1, S)$ adds no new countable subsets to the ground model.

%\begin{fact}\label{fact 1}
%(\cite{Shelah}, Lemma 7.4(a))\quad If $S\subseteq\omega_2$ and $S\cap S^2_0$ is %stationary, then $Club(\omega_2, S)$ does not add new $\omega$-sequences of %ordinals.
%\end{fact}

\begin{lemma}\label{third lemma}
{\rm (\cite[Lemma 18.6]{Cummings})} Suppose $CH$ holds and $S\subseteq \omega_2$ is such that $\{\alpha\in S\cap cf(\omega_1)|$ there exists $C \subseteq S\cap\alpha$ such that $C$ is a club in $\alpha$\} is stationary. Then $Club(\omega_2, S)$ is $\omega_1$--distributive.
\end{lemma}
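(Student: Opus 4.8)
The plan is to prove $\omega_1$-distributivity directly, by the diagonal club-shooting construction that is already used in spirit in Claim \ref{forcing preserve oemga}: given a condition $p_0$ and a sequence $\langle D_\eta : \eta < \omega_1\rangle$ of open dense subsets of $Club(\omega_2,S)$, I would build a descending $\omega_1$-chain of conditions that meets the $D_\eta$ one after another, and then argue that its union closes off to a genuine condition below $p_0$ lying in every $D_\eta$. The only thing that can go wrong is that a limit of conditions fails to be a condition, because the supremum of its range need not lie in $S$; the reflection hypothesis on $S$ is exactly what is engineered to defeat this.

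First I would fix the reflecting stationary set $T=\{\alpha\in S:\mathrm{cf}(\alpha)=\omega_1 \text{ and } S\cap\alpha \text{ contains a club of }\alpha\}$ and a large regular $\theta$. Using $CH$ in the form $\aleph_1^{\aleph_0}=\aleph_1$, I would build $M\prec H_\theta$ with $|M|=\omega_1$, ${}^{\omega}M\subseteq M$, $\{p_0,S,\langle D_\eta:\eta<\omega_1\rangle\}\subseteq M$, and $M\cap\omega_2=\delta$ an ordinal; by a standard reflection argument (the set of such $\delta$ contains a club of $\omega_2$, which I intersect with the stationary set $T$) I may also demand $\delta\in T$. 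I then fix a club $C=\{c_\eta:\eta<\omega_1\}\subseteq S\cap\delta$, increasing, continuous and cofinal in $\delta$, and arrange $C\subseteq M$. Since $C$ is closed, all its limit points below $\delta$, as well as $\delta$ itself, lie in $S$. The closure ${}^{\omega}M\subseteq M$ is what keeps the whole construction inside $M$: it guarantees that each approximation, even at limits of countable cofinality below $\omega_1$, is an element of $M$ and hence has range bounded below $\delta=M\cap\omega_2$, so that elementarity remains available to produce the next extension.

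Next I would recursively construct a descending sequence $\langle q_\eta:\eta<\omega_1\rangle$ of conditions in $M$, maintaining that the top value of $q_\eta$ is some $d_\eta\in C$ with $d_\eta\ge c_\eta$ and $\langle d_\eta\rangle$ strictly increasing. At a successor $\eta+1$, elementarity applied to $q_\eta\in M$ and $D_\eta\in M$ yields $r\le q_\eta$ with $r\in D_\eta\cap M$, so $\sup\mathrm{ran}(r)\in M\cap\omega_2<\delta$; I then pick $d_{\eta+1}\in C$ above $\sup\mathrm{ran}(r)$ and append it as a new top point, remaining in $D_\eta$ by openness. At a limit $\eta$ I set $q_\eta$ to be the union of the earlier conditions together with the top point $\sup_{\zeta<\eta}d_\zeta$, which lies in $C\subseteq S$ because $C$ is closed; continuity at the new top point holds by construction, so $q_\eta$ is again a condition, and $q_\eta\in M$ by ${}^{\omega}M\subseteq M$. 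Finally $q:=\bigcup_{\eta<\omega_1}q_\eta$ has range cofinal in $\delta$ (since $d_\eta\ge c_\eta\to\delta$), so appending the top value $\delta\in S$ yields a condition $q\le p_0$ with $q\in D_\eta$ for every $\eta<\omega_1$, which establishes distributivity.

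The hard part is the one already flagged: ensuring that every limit stage produces a legal condition. This is precisely what forces the two design choices — pinning all top values into the closed reflecting club $C$ (so that countable-limit suprema, and the terminal supremum $\delta$, automatically stay in $S$) and closing $M$ under $\omega$-sequences via $CH$ (so the approximations never escape $\delta$ and elementarity is preserved at every step). Once these two mechanisms are in place, everything else is the routine bookkeeping of a diagonal club-shooting argument.
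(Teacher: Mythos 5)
Your proof is correct and takes essentially the same route as the paper, which cites Cummings for this lemma but displays the method in the proof of Claim \ref{qisdistributive}: a model $M \prec H_\theta$ of size $\omega_1$ with ${}^{\omega}M \subseteq M$ (from {\sf CH}) and $M \cap \omega_2 = \delta$ a reflection point of $S$, a club $C \subseteq S \cap \delta$, and a diagonal construction of length $\omega_1$ whose top values are pinned into $C$ so that countable limit stages and the final stage $\delta$ itself remain legal conditions. The only cosmetic difference is that the paper anchors the limit top values at $\sup(M_i \cap Ord)$ for a continuous tower of countable submodels of $M$, whereas you choose them directly from $C$; the two devices are interchangeable.
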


%\begin{theorem}
%(\cite{Shelah}, Theorem 7.5) \quad The following two theories are equi-consistent:
%\begin{enumerate}[(1)]
%  \item $ZFC\,+\,$ there is a 2-Mahlo cardinal.
% \item $ZFC\,+\, GCH\,+\, \{\alpha<\omega_2\mid \alpha$ is inaccessible in $L$\} %contains a closed and unbounded subset of $\omega_2$.
%\end{enumerate}
%\end{theorem}

\begin{theorem}
The following two theories are equiconsistent:
\begin{enumerate}[(1)]
  \item $ZFC\,+ $ there is a remarkable cardinal $\kappa$ with $\varphi(\kappa) \, 
+ \, Ord$ is $2$--$\varphi$--Mahlo.
  \item $Z_3\,+\, HP(\varphi)$.
\end{enumerate}
\end{theorem}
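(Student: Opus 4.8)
The plan is to prove this equiconsistency by adapting the two-directional argument of Theorem \ref{the main strength result} (the strength of $Z_3 + {\sf HP}$), now tracking the predicate $\varphi$ throughout. For the direction $(2)\Rightarrow(1)$, I would assume $Z_3 + {\sf HP}(\varphi)$ and show $L$ models the hypothesis of (1). First, exactly as in Theorem \ref{the main strength result}, the witness $x$ for ${\sf HP}(\varphi)$ forces $\omega_1^V$ to be remarkable in $L$ via a condensation argument: taking $X \prec L_\eta[x][G]$ and collapsing, the point is that the collapse length $\bar\eta$ is $x$-admissible, hence $L \models \varphi(\bar\eta)$ by the choice of $x$; in particular $\bar\eta$ is an $L$-cardinal, so the resolvent condenses remarkably and Lemma \ref{key lemma on remarkable cn} applies. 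I would additionally need $\varphi(\omega_1^V)$ to hold in $L$ and the ``$2$-$\varphi$-Mahlo'' conclusion. Since there is a club class of $x$-admissibles all satisfying $L \models \varphi$, a reflection/stationarity argument (mirroring the $Z_2 + {\sf HP}(\varphi)$ proof of Theorem \ref{main result about SOA_2}) should yield that $\{\alpha \mid \varphi(\alpha)\}$ is stationary in $L$ and indeed that the set of $\alpha$ with $\varphi(\alpha)$ below which the $\varphi$-set is stationary is itself stationary — this gives $Ord$ is $2$-$\varphi$-Mahlo in $L$. The fact that $\omega_1^V$ is remarkable \emph{and} $\varphi(\omega_1^V)$ should come out of running the condensation argument simultaneously for the remarkability witness and for $\varphi$.

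For the harder direction $(1)\Rightarrow(2)$, I would start from $L \models ZFC + \kappa$ is remarkable with $\varphi(\kappa) + Ord$ is $2$-$\varphi$-Mahlo, under a minimality assumption on $L$ analogous to (\ref{minimality}). The plan is to force $S_\mu$-stationarity as in Theorem \ref{the main strength result} by collapsing below $\kappa$ and then $Col(\kappa, <Ord)$, arranging $Z_3$ to hold; but now I must \emph{also} preserve a stationary class of $\varphi$-points and the $2$-$\varphi$-Mahlo structure so that the final witness will deliver $L \models \varphi(\alpha)$ rather than merely ``$\alpha$ is an $L$-cardinal'' at admissibles. This is where the subcomplete-forcing apparatus (Facts \ref{namba forcing}, \ref{club forcing}, Lemma \ref{third lemma}) enters: after obtaining the almost-disjoint-coded set $A$ with the $S_\mu$-stationarity property (\ref{consequence_of_rem}), I would shoot clubs through the $\varphi$-points using $Club(\omega_1, S)$ and $Club(\omega_2, S)$-type forcings — whose subcompleteness guarantees no reals are added and whose distributivity (Lemma \ref{third lemma}) preserves cardinals — iterated with RCS so that in the final model every relevant admissible ordinal lands in a shot club of $\varphi$-points. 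The $2$-$\varphi$-Mahlo hypothesis is precisely what makes the two-level club shooting possible: one level to guarantee $\varphi$ at the target admissibles, and a reflected level to keep the stationarity needed at the next stage of the iteration.

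After the forcing and reshaping, I would define $D = A \oplus F$ (as in Theorem \ref{the main strength result}) and code by a real $x$ via almost disjoint forcing, verifying $L[x] \models Z_3$ and then checking ${\sf HP}(\varphi)$ by the same two-case analysis: for $x$-admissible $\alpha \geq \omega_1$, the reshaping plus the shot clubs force $\alpha$ into the $\varphi$-club, giving $L \models \varphi(\alpha)$; for $\alpha < \omega_1$, the Case 1 / Case 2 dichotomy on the reshaping witnesses $\xi^*$ goes through verbatim, now concluding $L \models \varphi(\alpha)$ instead of merely that $\alpha$ is an $L$-cardinal.

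\emph{The hard part will be} the club-shooting/reshaping interaction in the $(1)\Rightarrow(2)$ direction. In Theorem \ref{the main strength result} the distributivity of $\mathbb{P}$ rested on the ``key point'' (\ref{key_thing}) that every $A\cap\delta$-admissible in $[\delta,\bar\mu]$ is an $L$-cardinal, obtained from $S_\mu$-stationarity via remarkability. Here I must upgrade (\ref{key_thing}) to say those admissibles satisfy $L \models \varphi$, which requires the condensation points $X$ to have $o.t.(X\cap\mu) \in \{\alpha \mid \varphi(\alpha)\}$ — not merely an $L$-cardinal. Securing this stronger stationarity under $Col(\omega,<\kappa)$ is exactly where $\varphi(\kappa)$ together with $\kappa$ remarkable is needed, and threading it through the RCS iteration of subcomplete club-shooting forcings — while maintaining the reshaping property and the $\omega_1$-distributivity of $Club(\omega_2,S)$ at each stage (which demands the $2$-$\varphi$-Mahlo hypothesis to keep the relevant sets stationary) — is the delicate bookkeeping that constitutes the bulk of the argument.
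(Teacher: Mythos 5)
Your direction $(2)\Rightarrow(1)$ matches the paper's argument (remarkability of $\omega_1^V$ via condensation, $\varphi(\omega_1^V)$ because $\omega_1^V$ is $x$--admissible, and the club-of-admissibles reflection argument for $2$--$\varphi$--Mahloness), so no complaints there. But your $(1)\Rightarrow(2)$ has genuine gaps, beginning with the minimality assumption ``analogous to (\ref{minimality})'': no such assumption is available or useful here, and the paper pointedly does without it. The hypothesis (1) forces $L$ to contain many inaccessibles (every $\varphi$-point is one), hence many $\alpha$ with $L_\alpha \models {\sf ZFC}$, and minimality relative to the full theory (which includes the scheme ``$Ord$ is $2$--$\varphi$--Mahlo'') does not rule out ordinals $\alpha$ in the non-$\varphi$ region that stay uncollapsed in $L_{\alpha+1}[B\cap\alpha]$; so the coding property analogous to (\ref{jajajaja}) simply fails without extra work. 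The paper replaces minimality by two additional reshaping forcings: the class forcing ${\mathbb S}$, whose $\omega_1$--distributivity (Claim \ref{pisdistributive}) is proved by a definable-singularization argument (the sequence $(\beta_i)$ with $L_{\beta_i+1}[B]\models \beta_i$ singular), and the later ${\mathbb R}$ at the $\omega_1$ level. You mention ``reshaping'' but give no substitute for minimality, which is exactly where your plan would break.

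You also misidentify what you call the hard part. You propose to upgrade (\ref{key_thing}) by demanding condensation points $X$ with $o.t.(X\cap\mu)$ itself a $\varphi$-point; the paper needs no such strengthened stationarity. It keeps $S_\mu$ exactly as before ($o.t.(X\cap\mu)$ an $L$-cardinal) and gets ``$L\models\varphi(\lambda')$'' at the relevant admissibles by elementarity plus the $\Sigma_2$-ness of $\varphi$, which transfers $L_{\bar\mu}\models\varphi(\alpha)$ upward to $L\models\varphi(\alpha)$ once $\bar\mu$ is an $L$-cardinal. The real technical heart, which your sketch never isolates, is Claim \ref{stationarity}: that the subcomplete RCS iteration preserves the stationarity of $S_\mu$, so that the final reshaping ${\mathbb R}$ is still $\omega$-distributive. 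Moreover, your iteration is underspecified in ways that matter: the paper's three-case iteration (collapses $Col(\omega_1,2^{\omega_1})$ on $S_0$, Namba forcing on $S_1$, $Club(\omega_1,S_1\cap\alpha)$ on $S_2$) exists precisely to manufacture, via Claim \ref{xx}, the hypothesis of the Cummings-style Lemma \ref{third lemma} so that the class club shooting ${\mathbb Q}=Club(Ord,S_1\cup S_2)$ is $\omega_1$-distributive; you never mention Namba or the collapses, without which that distributivity argument has no input. Finally, your $D=A\oplus F$ omits the club $C^*$ through $\{\alpha<\kappa \mid L\models\varphi(\alpha)\}$ (resting on Claims \ref{forgotten} and \ref{forgotten2}); without $C^*$ the Case 1/Case 2 analysis for $x$-admissible $\alpha<\omega_1$ has no ``largest element of $C^*$ below $\alpha$'' to run on, and the conclusion $L\models\varphi(\alpha)$ for small admissibles is unreachable.
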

\begin{proof}
We first prove that $(2)$ implies that $(1)$ holds in $L$. As {\sf HP$(\varphi)$} implies {\sf HP}, Theorem \ref{the main strength result}
gives that $Z_3\,+\,$  {\sf HP$(\varphi)$} implies $L\models ZFC\, +\, \omega_1^V$ is remarkable.  
Let $x \in 2^\omega$ witness ${\sf HP}(\varphi)$. 
As $\omega_1^V$ is $x$--admissible, $\varphi(\omega_1^V)$ holds
true in $L$.

There is a club of $x$--admissibles,
so that we may pick  some club
$C\subseteq \{\alpha\in Ord\mid L\models \varphi(\alpha)\}$.
Suppose $D$ is a club in $L$. Pick $\alpha$ in $C\cap D$ of cofinality $\omega_1$ such that $\alpha$ is a limit point of  $C\cap D$. Since $\alpha\in C, L\models \varphi(\alpha)$. We want to see that 
$\{\beta<\alpha\mid L\models \varphi(\beta)\}$ is stationary in $L$. Let $E\subseteq \alpha$ in $L$ be a club in $\alpha$. Note that $E\cap C\cap\alpha\neq\emptyset$. 
If $\beta\in E\cap C\cap\alpha$, then $L\models \varphi(\beta)$.
Hence $Ord$ is $2$--$\varphi$--Mahlo in $L$.

Now we show that consistency of $(1)$ implies  consistency of $(2)$. We force over
$L$. Suppose that (1) holds in $L$.
\par Let $H$ be $Col(\omega,<\kappa)$-generic over $L$.

\begin{claim}\label{forgotten}
$\{ \alpha < \kappa \colon L \models \varphi(\alpha) \}$ is stationary 
in $L[H]$.
\end{claim}

\begin{proof}
We work in $L[H]$. Let $C \subset \kappa = \omega_1^{L[H]}$ be club, and let 
$L_\theta \models \varphi(\kappa)$, where $\theta > \kappa$ is regular.
As $\kappa$ is remarkable, there is some 
$\sigma \colon L_{\bar \theta}[H \cap L_\alpha] \rightarrow L_\theta[H]$
such that $\alpha = crit(\sigma)$, $\sigma(\alpha)=\kappa$, $C \in {\rm
ran}(\sigma)$, and ${\bar \theta}$ is a regular cardinal in $L$.
By elementarity, $L_{\bar \theta} \models \varphi(\alpha)$, which implies
that $L \models \varphi(\alpha)$, as $\varphi$ is $\Sigma_2$. But $\alpha \in C$.
\end{proof}

\par Let $H$ be $Col(\omega,<\kappa)$-generic over $L$.
Over $L[H]$, we define a class RCS-iteration  $( ( P_{\alpha}, \dot{Q_{\alpha}}) | \alpha\in Ord)$ 
as follows. We let $P_0=\emptyset,  P_{\alpha+1}=P_{\alpha}\ast \dot{Q_{\alpha}}$ for $\alpha\in Ord$ and for limit ordinal $\alpha$ we let $P_{\alpha}$ be the revised limit (Rlim) of $( ( P_{\beta}, \dot{Q_{\beta}}) | \beta\in \alpha)$. 
%Let $S=\{\gamma\in Ord| L\models \varphi(\gamma)$\}. 
The definition of $Q_{\alpha}$ splits into three cases as follows. \par
Let 
\begin{enumerate}
\item[(0)] $S_0 = \{ \alpha | L \models \, \lnot \varphi(\alpha) \}$,
\item[(1)] $S_1 = \{ \alpha | L \models \, \varphi(\alpha)$, but $\{\beta<\alpha| \varphi(\beta)\}$ is not stationary in $L$ $\}$, and
\item[(2)] $S_2 = \{ \alpha | L \models \, \varphi(\alpha)$, and $\{\beta<\alpha| \varphi(\beta)\}$ is stationary in $L$ $\}$.
\end{enumerate}

{\bf Case 0.} If $\alpha \in S_0$, then let $Q_{\alpha}=Col(\omega_1, 2^{\omega_1})$ which collapses $2^{\omega_1}$ to $\omega_1$ by countable conditions.

{\bf Case 1.} If $\alpha \in S_1$, then let $Q_{\alpha}=$ Namba forcing.

 {\bf Case 2.} If $\alpha \in S_2$, 
 then let $Q_{\alpha}=Club(\omega_1, S_1\cap\alpha)$.

Note that if $L\models \varphi(\alpha)$, then $L^{Col(\omega,<\kappa)\ast P_{\alpha}}\models\alpha=\omega_2$ since $Col(\omega,<\kappa)\ast P_{\alpha}$ has the $\alpha$-c.c. This also implies that $S_1 \cap\alpha$ is stationary
in $L^{Col(\omega,<\kappa)\ast P_{\alpha}}$. Moreover, in $L^{Col(\omega,<\kappa)\ast P_{\alpha}}$,
%$S_1 \cap\alpha$ is stationary, as $P_\alpha$ has the $\alpha$--c.c., 
$S_1 \cap\alpha$
consists of points of cofinality of $\omega$. So 
it makes sense to shoot a club subset of $\alpha$ with order type $\omega_1$ through $S_1\cap\alpha$.

Finally let ${\mathbb P}$ be the revised limit of $( ( P_{\alpha}, \dot{Q_{\alpha}}) | \alpha\in Ord)$. By Facts \ref{namba forcing} and  \ref{club forcing} and by \cite[Theorem 3, p.\ 56]{Jensen}, $P_{\alpha}$ is subcomplete
for all $\alpha \in Ord$. Standard arguments give us that ${\mathbb P}$ has the $Ord$-c.c. Hence ${\mathbb P}$ does not add reals and $\omega_1$ is preserved.  Let $G$ be ${\mathbb P}$-generic over $L[H]$. $L[H,G]\models Z_3$.
The following is stated for the record.
\begin{claim}\label{xx}
In $L[H][G]$,
if $\alpha\in S_1$, then $cf(\alpha)=\omega$, 
and if $\alpha
\in S_2$, then $cf(\alpha)=\omega_1$ and there is a club in $\alpha$ of order type $\omega_1$ contained in $S_1 \cap \alpha$. 
\end{claim}

For each $L$-cardinal $\mu>\omega_1$,  we again let $S_{\mu}=\{X \prec L_{\mu}| X$ is countable and $o.t.(X \cap \mu)$ is an $L$-cardinal\}, as being defined in the 
respective models of set theory which are to be considered. \par
The following proof shows that subcomplete forcings preserve the stationarity
of $S_\mu$.

\begin{claim}\label{stationarity}
In $L[H,G]$, for each $L$-cardinal $\mu>\omega_1$, $S_{\mu}$ as defined in  $L[H,G]$ is stationary.
\end{claim}
\begin{proof}
Fix an $L$-cardinal $\mu>\omega_1$. Suppose $S_{\mu}$ is not stationary
in $L[G,H]$. Then there are $p\in P_{\alpha}$ and $\tau \in L[H]^{P_{\alpha}}$
for some $\alpha$ such that $p\Vdash^{P_{\alpha}}_{L[H]}$ ``$\tau: [\check{\mu}]^{<\omega}\rightarrow \check{\mu}$ and there is no countable $X\subseteq \check{\mu}$ such that $X$ is closed under $\tau$ and $o.t.(X)$ is an $L$-cardinal." Let $\mu^*$ be an $L$--cardinal which is bigger than 
$\mu$. Let $\sigma: N\rightarrow L_{\mu^*}[H]$ where $N$ is countable, transitive and full,
such that $P_\alpha$, $p$, $\mu$, $\tau \in N$. Let $\sigma(\bar{P},\delta, \bar{p}, {\bar \mu}, {\bar \tau})=P_{\alpha}, \omega_1, p,\mu,\tau$. Let 
us write $N=L_{\gamma}[H\upharpoonright\delta]$. \par
Because $\kappa$ was remarkable in $L$, 
cf.\ Lemma \ref{key lemma on remarkable cn}, may assume that $N$ 
was picked in such a way that $\gamma$ is an $L$-cardinal. Let $\bar{G}$ be $\bar{P}$-generic over $L_{\gamma}[H\upharpoonright\delta]$ with $\bar{p}\in \bar{G}$. Since $P_{\alpha}$ is subcomplete, by the definition of subcompleteness, there is $p^{\ast}\in P_{\alpha}$, $p^\ast \leq p$, such that whenever $G^{\ast}$ is $P_{\alpha}$-generic over $L[H]$ with $p^\ast \in G^{\ast}$, then there is $\sigma^{\prime}\in L[H][G^{\ast}]$ such that $\sigma^{\prime}: L_{\gamma}[H\upharpoonright\delta][\bar{G}]\rightarrow L_{\mu}[H][G^{\ast}]$ and 
$\sigma'(\bar{P},\delta, \bar{p}, {\bar \mu}, {\bar \tau})=P_{\alpha}, \omega_1, p,\mu,\tau$. \par
Since $p\in G^{\ast}$, there is no countable $X\subseteq\mu$ such that $X$ is closed under $\tau^{G^{\ast}}$ and $o.t.(X)$ is an $L$-cardinal. But $ran(\sigma^{\prime})\cap\mu$ is countable, closed under $\tau^{G^{\ast}}$ and $o.t.(ran(\sigma^{\prime})\cap\mu)=\gamma$ is an $L$-cardinal. Contradiction!
\end{proof}

%\begin{remark}
%For each $L$-cardinal $\mu>\omega_1, S_{\mu}^{L[H]}$ as defined in $L[H]$ is %stationary in $L[H]$; but $S_{\mu}^{L[H]}$ is not stationary in $L[H,G]$.%\footnote{Since $\kappa$ is remarkable in $L, S_{\mu}^{L[H]}$ is stationary. Take %$\theta$ such that $L[H,G]\models\theta=\omega_2$. Let $A$ be a Namba sequence in %$\theta$. Note that there is no structure $X\prec ( L_{\theta}[H,G], A)$ %such that $X$ is countable and $X\in L[H]$. So in $L[H,G], S_{\mu}^{L[H]}$ is not %stationary.}
%\end{remark}
%
%Note that in $L[H,G], S\cap \{\alpha\mid cf(\alpha)=\omega_1\}$ is stationary since %$P$ has $Ord$-c.c. So we can  shoot a club through $S$. Let $Q=Club(Ord, S)$. Note %that for any $\alpha$, if $\alpha\in S$  and $cf(\alpha)=\omega_1$, then there exists %$C \subseteq S\cap\alpha$ such that $C$ is a club in $\alpha$.

We now let ${\mathbb Q}= Club(Ord,S_1 \cup S_2)$. The proof of the following 
Claim imitates the proof of Lemma \ref{third lemma}.

\begin{claim}\label{qisdistributive}
${\mathbb Q}$ is $\omega_1$--distributive.
\end{claim}
\begin{proof}
In $L[H,G]$, $S_2$ is stationary and $CH$ holds. Suppose 
%$p\in {\mathbb Q}$ and ${\tau}$ is a ${\mathbb Q}$-name such that $f\Vdash %{\tau}$ is a function from $\omega_1$ to $Ord$.   Say $\tau$
${\vec D}=(D_i | i<\omega_1)$ is a, say $\Sigma_m$--, definable sequence of open dense classes.
Pick $M\prec_{\Sigma_{m+5}} V$ such that $M$ contains the parameters needed in the definition of ${\vec D}$, $M^{\omega}\subseteq M$, and $M\cap Ord\in S_2$. \par
Let us write $\delta=M\cap Ord$. By Claim \ref{xx}, we may pick some
$C\subseteq S_1 \cap\delta$, a club in $\delta$. Now we can 
simultaneously build a descending sequence $( p_i| i \leq \omega_1)$
with $p_0=p$ 
and a continuous tower $( M_{i}| i \leq \omega_1)$ 
of countable elementary substructures of $M$ with $M_{\omega_1}=M$
such that for all $i<\omega_1$ we have:
\begin{enumerate}[(a)]
%\item $p_0=f$,
\item $p_i \in M_{i+1}$,
  %\item $M_i\in M, M_{i}\prec M$ and $M_{i}$ is countable,
  %\item $\{ p_i , ( M_{j}|j\leq i )\} \subseteq M_{i+1}$,
\item $p_{i+1} \in D_i$ and 
$p_{i+1}({\rm max}({\rm dom}(p_{i+1}))) > {\rm sup}(M_{i} \cap Ord)$, 
  \item $\sup(M_i\cap Ord)\in C$, and 
\item if $i<\omega_1$ is a limit ordinal, then $p_i \upharpoonright 
{\rm max}({\rm dom}(p_i)) = \bigcup_{j<i} \, p_j$ and hence
$p_i({\rm max}({\rm dom}(p_{i})))
= {\rm sup}(M_i \cap Ord) \in C$.
  %\item $M_{\omega_1}=M$.
\end{enumerate}
Then $p_{\omega_1} \leq p$ and $p_{\omega_1} \in \bigcap_{i<\omega_1} \,
D_i$.
\end{proof}

Let $I$ be ${\mathbb Q}$-generic over $L[H,G]$, and let
$C\subseteq S_1 \cup S_2$ be the club added by $I$. By Claim \ref{qisdistributive},
$L[H,G,I]\models Z_3$.  As in the proof of 
Theorem \ref{the main strength result}, we can pick $B\subseteq Ord$ such that $L[H,G,I]=L[B]$ and for any $\alpha\in C$, $B$ restricted to the odd ordinals in $[\alpha, \alpha+\omega_1)$ codes a well ordering of $\min(C\setminus(\alpha+1))$.

We now reshape as follows.\footnote{In the proof of Theorem \ref{the main strength result} there was no need for 
reshaping at this point due to (\ref{minimality}).}

\begin{definition}
Define $p\in {\mathbb{S}}$ if and only if $p: \alpha\rightarrow 2$ for some $\alpha$ and for any $\xi\leq\alpha, L_{\xi+1}[B\cap\xi, p\upharpoonright\xi ]\models |\xi|\leq\omega_1$.
\end{definition}

\begin{claim}\label{pisdistributive}
${\mathbb{S}}$ is $\omega_1$-distributive.
\end{claim}
\begin{proof}
Let $\vec{D}=(D_i| i<\omega_1)$ be a sequence of open dense subclass of ${\mathbb{S}}$. Let $p\in {\mathbb{S}}$. We want to find $p_{\omega_1}$ such that $p_{\omega_1}\in \bigcap_{i<\omega_1} D_i$ and $p_{\omega_1}\leq p$. Say $\vec{D}$ is $\Sigma_m$-definable in $L[B]$ with parameters $\bar{s}$. 
Let $(\beta_i | i \leq \omega_1)$ the the first $\omega_1+1$ many 
$\beta$ such that
$L_\beta \prec_{\Sigma_{m+5}} L[B]$ and $\omega_1+1\cup\{\bar{s}\}\subseteq 
L_\beta[B]$.  For every $i \leq \omega_1$, $(\beta_j| j<i)$ is $\Sigma_{m+6}$--definable over $L_{\beta_{i}}[B]$  and hence $(\beta_j| j<i)\in L_{\beta_{i}+1}[B]$. So for $i\leq \omega_1, L_{\beta_{i}+1}[B]\models \beta_i$ is singular. \par
Now we define $(p_i| i\leq\omega_1)$ by induction as follows. Let $p_0=p$. Given $p_n\in {\mathbb{S}}$, take $p_{n+1}\in {\mathbb{S}}$ such that $p_{n+1}\in D_n\cap X_{n+1}, p_{n+1}\leq p_n$ and $dom(p_{n+1})\geq\beta_n$. Let $p_{\omega_1}=\bigcup_{i<\omega_1} p_i$. Note that $dom(p_{\omega_1})=\beta_{\omega_1}$, $p_{\omega_1}\in {\mathbb{S}}$, in fact $p_{\omega_1}\in \bigcap_{i<\omega_1} D_i$, and $p_{\omega_1}\leq p$. 
\end{proof}

By forcing with ${\mathbb{S}}$ over $L[H,G,I]$, we get $\bar{B}\subseteq Ord$ such that for any $\alpha\in Ord$, $L_{\alpha+1}[B\cap\alpha, \bar{B}\cap\alpha]\models |\alpha|\leq\omega_1$. Let $E=B\oplus \bar{B}$. Of course, $L[E]\models Z_3$, and for any $\alpha\in Ord$, $L_{\alpha+1}[E\cap\alpha]\models |\alpha|\leq\omega_1$. We 
also have that for all $\alpha\in C$, $E$ restricted to the odd ordinals in $[\alpha, \alpha+\omega_1)$ codes a well ordering of $\min(C\setminus(\alpha+1))$.

By Claims \ref{qisdistributive} and \ref{pisdistributive}, $L[H,G]$ and $L[E]$
have the same sets. Therefore, trivially, Claim \ref{stationarity} is still true 
with $L[E]$ replacing $L[H,G]$.

Exactly as in the proof of Theorem \ref{the main strength result} we can do almost disjoint forcing to add $A\subseteq\omega_1$ to code $E$. Note that $L[E][A]=L[A]$ and the forcing we use to add $A$ is countably closed and $Ord$-$c.c.$. Since $L[E]\models Z_3$, $L[A]\models Z_3$. By the countable closure,
Claim \ref{stationarity} is still true 
with $L[A]$ replacing $L[H,G]$.

By the same argument as in Theorem \ref{the main strength result} we can show that if $\alpha > \omega_1$ is $A$-admissible then $\alpha\in C$, and hence $L\models \varphi(\alpha)$. 
By our hypothesis on $\kappa$, $L \models \varphi(\kappa)$, so that if fact 
if $\alpha \geq \omega_1$ is $A$-admissible then $L\models \varphi(\alpha)$. 

Now we do reshaping over $L[A]$ as follows.

\begin{definition}
Define $p\in \mathbb{R}$ if and only if $p: \alpha\rightarrow 2$ for some $\alpha<\omega_1$ and $\forall\xi\leq\alpha \, \exists\gamma \, (L_{\gamma}[A\cap\xi, p\upharpoonright\xi]\models ``\xi$ is countable" and if $\lambda\in [\xi, \gamma]$ is  $(A\cap\xi)$-admissible, then $L\models \varphi(\lambda))$.
\end{definition}

\begin{claim}
$\mathbb{R}$ is $\omega$-distributive.
\end{claim}
\begin{proof} Recall that for each $L$-cardinal $\mu>\omega_1$,  we defined
$S_{\mu}=\{X \prec L_{\mu}| X$ is countable and $o.t.(X \cap \mu)$ is an $L$-cardinal $\}$.
We shall use the fact that in $L[A]$,
$S_{\mu}$ as defined in  $L[A]$ is stationary. \par
In fact, essentially the same argument as in the proof of Claim 
\ref{forcing preserve oemga}
shows that $\mathbb{R}$ is $\omega$--distributive. In the following we only point out the place we use $\varphi$ is $\Sigma_2$ in our argument.

Let $p\in \mathbb{R}$ and $\vec{D}=(\bar{D_n}| n\in\omega)$ be a sequence of open dense sets. Pick large enough $L$-cardinal $\mu$ such that  $\vec{D}\in L_{\mu}[A]$ and $L_{\mu}[A]\models$ ``if $\alpha\geq\omega_1$ is $A$-admissible, then $L\models\varphi(\alpha)$''.  As $S_{\mu}$ is stationary, we can pick $X$ such that $\pi: L_{\bar{\mu}}[A\cap\delta]\cong X\prec L_{\mu}[A], |X|=\omega, \{p,\mathcal{P}, A, \vec{D}, \omega_1, \nu\}\subseteq X$ and $\bar{\mu}$ is an $L$-cardinal where $\pi(\delta)=\omega_1(\delta=X\cap\omega_1)$. Note that by elementarity, $L_{\bar{\mu}}[A\cap\delta]\models$ ``if $\alpha\geq\delta$ is $A\cap\delta$-admissible, then $L\models\varphi(\alpha)$''. Suppose $\alpha\in [\delta, \bar{\mu})$ is $A\cap\delta$-admissible. Then $L_{\bar{\mu}}\models\varphi(\alpha)$. Since $\bar{\mu}$ is an $L$-cardinal and $\varphi$ is $\Sigma_2$, $L\models\varphi(\alpha)$. The rest of the arguments are the same as in the proof of Claim 
\ref{forcing preserve oemga}.
\end{proof}

Using Claim \ref{forgotten}, a
simple variant of the previous proof also shows the following.

\begin{claim}\label{forgotten2}
$\{ \alpha < \kappa \colon L \models \varphi(\alpha) \}$ is stationary in
$L[A]^{\mathbb R}$.
\end{claim}

Forcing with $\mathbb{R}$ adds $F: \omega_1\rightarrow 2$ such that for all $\alpha<\omega_1$ there exists $\gamma$ such that $L_{\gamma}[A\cap\alpha, F\upharpoonright\alpha]\models \alpha$ is countable and every $(A\cap\alpha)$-admissible $\lambda\in [\alpha, \gamma]$ satisfies that $L\models \varphi(\lambda)$.
%(for each $\alpha<\omega_1$ let $\alpha^{\ast}$ be the least such $\gamma$).  
Using Claim \ref{forgotten}, we may force over $L[A,F]$ and shoot a club $C^*$
through $\{ \alpha < \kappa \colon L \models \varphi(\alpha) \}$
in the standard way. 
Let $D=A\oplus F \oplus C^*$. We may assume that 
for $\lambda\in C^*$,  $D$ restricted to odd ordinals in $[\lambda, \lambda+\omega)$ codes a well ordering of $\min(C^* \setminus(\lambda+1))$. Since $\mathbb{R}$ 
and the club shooting adding $C^*$ are 
$\omega$--distributive, it is easy to see that $L[D]\models Z_3$.

Now we work in $L[D]$. Do almost disjoint forcing to code $D$ by a real $x$. This forcing is $c.c.c$. Note that $L[D][x]=L[x]$, and $L[x] \models Z_3$.

Now we work in $L[x]$. Suppose $\alpha$ is $x$-admissible. We show that $L\models\varphi(\alpha)$. If $\alpha\geq\omega_1$, then  $\alpha$ is also $A$-admissible and hence $L\models\varphi(\alpha)$. Now we assume that $\alpha<\omega_1$ and $L\nvDash \varphi(\alpha)$. Then $\alpha\notin C^*$. Let $\lambda<\alpha$ be the largest element of $C^*$ which is smaller than $\alpha$
and $\bar{\lambda}=\min(C\setminus(\alpha+1)) > \alpha$. 
For every $\xi < \omega_1$, let $\xi^* > \xi$
be least such that
$L_{\xi^*}[A\cap\xi, F\upharpoonright\xi]\models \xi$ is countable. By
the properties of $F$,
every $(D\cap\xi)$-admissible $\lambda'\in [\xi, \xi^*]$
satisfies $L\models \varphi(\lambda')$.

Case 1: For all $\xi<\lambda + \omega$, $\xi^*<\alpha$. Then
$D \cap (\lambda+\omega)$ can be computed inside $L_\alpha[x]$.
But then, as $\alpha$ is $x$--admissible, the ordinal coded by $D$ restricted
to the odd ordinals in $[\lambda,\lambda +\omega)$, namely ${\bar
\lambda}$, is in $L_\alpha[x]$, so that ${\bar \lambda}<\alpha$.
Contradiction!

Case 2: Not Case 1. 
Let $\xi<\lambda + \omega$ be least such that $\xi^* \geq \alpha$.
Then $D \cap \xi$ can be computed inside $L_\alpha[x]$. As $\alpha$ is
$x$--admissible, $\alpha$ is thus $(D \cap \xi)$--admissible also.
But all $(D\cap\xi)$-admissibles $\lambda'\in [\xi, \xi^*]$
satisfy $L\models \varphi(\lambda')$, so that $L \models \varphi(\alpha)$
by $\xi< \alpha \leq \xi^*$.
Contradiction!

We have shown that $L[x]\models Z_3\, +\,$  {\sf HP$(\varphi)$}. 
\end{proof}

\begin{corollary}
$Z_3\,+\,$ {\sf HP$(\varphi)$} does not imply $0^{\sharp}$ exists.
\end{corollary}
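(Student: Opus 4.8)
The plan is to argue exactly as for Corollary \ref{the main corollary for paper}, reading off the conclusion from the construction in the proof of the Theorem just established rather than proving anything new. Recall that, starting from a model $L$ of hypothesis (1), that proof produces a real $x$ with $L[x]\models Z_3 + {\sf HP}(\varphi)$, where $L[x]$ is obtained from $L$ by a sequence of set and tame definable class forcings: the collapse $Col(\omega,<\kappa)$, the $RCS$-iteration ${\mathbb P}$, the club shooting ${\mathbb Q}=Club(Ord,S_1\cup S_2)$, the reshaping forcings ${\mathbb S}$ and ${\mathbb R}$, the club shooting adding $C^*$, and the almost disjoint codings producing $A$ and then $x$. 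In particular $L[x]$ is a forcing extension of $L$. I would therefore only need to verify the single additional point that $0^{\sharp}$ does not exist in $L[x]$; together with $L[x]\models Z_3 + {\sf HP}(\varphi)$ this yields a model of $Z_3 + {\sf HP}(\varphi)$ in which $0^{\sharp}$ does not exist, which is exactly the desired non-implication.

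The key step is that none of the forcings listed above adds $0^{\sharp}$ over $L$. This is the well-known fact that neither set forcing nor tame definable class forcing can create a nontrivial elementary embedding $j\colon L\to L$: such forcings leave $(L)^{L[x]}=L$ fixed, and a witness to ``$0^{\sharp}$ exists'' -- equivalently, by the discussion in Section \ref{forcing background}, a real coding a countable iterable premouse $(L_\alpha,\in,U)$ with $U\neq\emptyset$ -- would yield a closed unbounded class of indiscernibles for $L$ and hence such an embedding, which cannot appear generically over $L$. Since $0^{\sharp}\notin L$ to begin with, we conclude $0^{\sharp}\notin L[x]$, i.e.\ $L[x]$ satisfies that $0^{\sharp}$ does not exist. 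I expect this preservation statement -- specifically checking that the class-sized iterations and club shootings used are tame enough for the no-new-$0^{\sharp}$ principle to apply -- to be the only point requiring any care; everything else is immediate from the construction.

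Finally, to exhibit that the non-implication is not vacuous, I would note that hypothesis (1) is itself consistent at a level strictly below ``$0^{\sharp}$ exists,'' indeed consistent with $V=L$. By the facts on remarkable cardinals recalled after Lemma \ref{key lemma on remarkable cn}, remarkability is downward absolute to $L$ and its consistency already follows from an $\omega$-Erd\"os cardinal; moreover if $0^{\sharp}$ exists then every Silver indiscernible is remarkable in $L$, and the Silver indiscernibles form a club class of inaccessibles (Mahlo, etc.) in $L$, so that the $\Sigma_2$ property $\varphi$ holds of them and the $\varphi$-Mahlo and $2$-$\varphi$-Mahlo requirements -- all being properties of $L$ -- are satisfied in $L$ at and below any such indiscernible. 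Hence $Con(\text{(1)})$ gives the consistency of $Z_3 + {\sf HP}(\varphi)$ together with ``$0^{\sharp}$ does not exist,'' while (1) can hold in $L$ and so does not itself imply $0^{\sharp}$. Consequently $Z_3 + {\sf HP}(\varphi)$ does not imply that $0^{\sharp}$ exists.
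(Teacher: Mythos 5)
Your overall strategy is indeed the intended one: the corollary is meant to be read off from the model $L[x]\models Z_3\,+\,${\sf HP$(\varphi)$} constructed in the theorem just proved, the only new point being that $0^{\sharp}$ fails in $L[x]$. But the step you yourself flag as ``the only point requiring any care'' is a genuine gap, and your sketch of it is circular: saying that a witness to $0^{\sharp}$ ``would yield a club class of indiscernibles for $L$ \dots which cannot appear generically over $L$'' just restates the claim to be proved. The classical no-new-$0^{\sharp}$ fact is a set-forcing fact, and here it cannot be applied by factoring, because $x$ is emphatically not set-generic over $L$: $x$ codes $D$, which codes $A$, which codes the proper class $E$; equivalently, the composite extension collapses every $L$-cardinal above $\kappa$ to $\aleph_1$ (this is how $Z_3$ is arranged), which no set forcing over $L$ can do. So one really must handle definable class forcing, and the off-the-shelf results in that direction (e.g.\ non-genericity of $0^{\sharp}$ for tame definable class forcing over $L$) are stated for ${\sf ZFC}$-preserving extensions and are typically proved assuming sharps exist in the background universe, whereas your final model satisfies only $Z_3$. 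A cheap rigorous fix is to argue by consistency strength instead of preservation: the first direction of the theorem shows $Z_3\,+\,${\sf HP$(\varphi)$} proves that hypothesis (1) holds in $L$; if this theory also proved ``$0^{\sharp}$ exists,'' then inside any model of it the least Silver indiscernible $i_0$ gives $L_{i_0}\prec L$, hence a set model $L_{i_0}\models (1)$, hence $Con({\rm (1)})$; combined with $Con({\rm (1)})\rightarrow Con(Z_3\,+\,{\sf HP}(\varphi))$ from the second direction, this contradicts G\"odel's second incompleteness theorem, granting $Con({\rm (1)})$. (Equivalently: run your construction over a model of (1)$\,+\,\neg Con({\rm (1)})$; the extension has the same arithmetic, so $0^{\sharp}\in L[x]$ would yield $L_{i_0}\models (1)$ and hence $Con({\rm (1)})$ there --- contradiction.)

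Separately, your closing paragraph contains a false claim: for an arbitrary $\varphi$ as in Definition \ref{defn_hpphi} there is no reason that Silver indiscernibles satisfy $\varphi$ in $L$, nor that hypothesis (1) sits strictly below $0^{\sharp}$ in consistency strength. For instance, $\varphi(\alpha)\equiv$ ``$\alpha$ is inaccessible and there is a countable transitive model of ${\sf ZFC}\,+$ a measurable cardinal'' is $\Sigma_2$ and satisfies the requirements of Definition \ref{defn_hpphi} (inaccessibility relativizes down, and the second clause is $\Sigma^1_2$, hence absolute to $L$ by Shoenfield); yet it is consistent that $0^{\sharp}$ exists while $L\models\varphi(i_0)$ fails for every indiscernible $i_0$, and the corresponding hypothesis (1) then lies \emph{above} $0^{\sharp}$ in strength. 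Fortunately this paragraph is dispensable: like Corollary \ref{the main corollary for paper}, the present corollary is a relative-consistency statement, implicitly read modulo $Con({\rm (1)})$ (if (1) is inconsistent, then by the first direction so is $Z_3\,+\,${\sf HP$(\varphi)$}, which then trivially implies everything), so no general ``non-vacuity'' argument of the kind you attempt is needed, and none is available for arbitrary $\varphi$.
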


By Theorem \ref{cited theorem from Woodin}, $Z_4\,+\,$ {\sf HP$(\varphi)$} implies $0^{\sharp}$ exists. As a corollary, $Z_4$ is the minimal system of higher order arithmetic to show that {\sf HP}, {\sf HP$(\varphi)$}, and $0^{\sharp}$ exists
are equivalent with each other.

Hugh Woodin conjectures that ``$Det(\Sigma_1^1)$ implies $0^{\sharp}$ exists" 
can be proven in $Z_2$.

\end{document}